\documentclass[12pt]{amsart}
\usepackage{amsmath}
\usepackage{amsthm}
\usepackage{amsfonts}
\usepackage{amssymb}
\usepackage[T1]{fontenc}
\usepackage{hyperref}
\usepackage[capitalise]{cleveref}
\usepackage{esint}
\usepackage{mathrsfs, mathtools, bm}
\usepackage{xcolor}
\usepackage[margin=2.5cm]{geometry}
\usepackage{cancel}
\usepackage{subfigure}
\usepackage{subcaption}
\usepackage{physics} 
\usepackage{comment}
\usepackage[dvipsnames]{xcolor}
\usepackage[normalem]{ulem}
\usepackage[isbn=false,url=false,eprint=false,style=alphabetic,doi=false]{biblatex}

\newtheorem{theorem}{Theorem}[section]
\newtheorem{proposition}[theorem]{Proposition}
\newtheorem{lemma}[theorem]{Lemma}

\newtheorem{defn}[theorem]{Definition}
\newtheorem{remark}[theorem]{Remark}

\newcommand{\R}{\mathbb{R}}

\newcommand{\nocontentsline}[3]{}
\newcommand{\tocless}[2]{\bgroup\let\addcontentsline=\nocontentsline#1{#2}\egroup}

\title{Compact Mean Curvature Flows from Non-Compact Initial Data}
\author{Theodora Bourni}
\address{Department of Mathematics, University of Tennessee Knoxville,
Knoxville TN, 37996-1320}
\email{tbourni@utk.edu}
\author{Nathan Burns}
\address{Department of Mathematics, University of Tennessee Knoxville,
Knoxville TN, 37996-1320}
\email{nburns6@vols.utk.edu}
\author{Marcus Flook}
\address{Mathematical Sciences Institute, Australian National University, Canberra ACT 2601, Australia}
\email{marcus.flook@anu.edu.au}
\date{}
\subjclass[2020]{53E10}
\keywords{Mean Curvature Flow, Non-Compact, Uniqueness} 
\begin{document}

\begin{abstract}
    We construct compact, embedded solutions to the mean curvature flow in $\R^{n+1}$ that arise from various non-compact initial data and exist for a finite time.
\end{abstract}

\maketitle

\tableofcontents

\section{Introduction}

Let $F_{0}:M^{n} \to \R^{n+1}$ be an immersion of an $n$-dimensional manifold without boundary into $\R^{n+1}$, and write $\mathcal{M}_{0} \doteqdot F_0(M^{n})$. We are interested in solutions to the mean curvature flow whose initial data is given by $F_{0}$; that is, maps $F:M^{n} \times [0,T) \to \R^{n+1}$ satisfying 
\begin{equation}\label{eq:mcf}
    \begin{cases}
        \displaystyle\frac{\partial}{\partial t}F(x,t) = \vec{H}(x,t) & (x,t) \in M^{n} \times [0,T) \\
        F(x,0) = F_{0}(x) & x \in M^{n},
    \end{cases}
\end{equation}
where $\vec{H}$ denotes the mean curvature vector of the evolving immersed manifold. Such a solution establishes a one parameter family of hypersurfaces $\{\mathcal{M}_{t}\}_{t \in [0,T)}$, where $\mathcal{M}_{t} \doteqdot F(M^{n},t)$, and, by a slight abuse of terminology, we also refer to this family as a solution to mean curvature flow.
The existence theory for \eqref{eq:mcf} in the case that $\mathcal{M}_{0}$ is compact is well understood. After choosing an appropriate gauge, it can be shown that \eqref{eq:mcf} is equivalent to a quasilinear parabolic equation (see \cite[Section 6.4]{AndrewsChowGuentherLangford2020}). The general theory of such equations then yields the existence of a unique solution at least for a short time \cite{Lieberman1996}.

When the initial immersed hypersurface $\mathcal{M}_{0}$ is non-compact, the situation is considerably more subtle. In \cite{EckerHuisken1991}, Ecker and Huisken proved that if $\mathcal{M}_{0}\doteqdot F_{0}(M^{n})$ is given as a locally Lipschitz entire graph over a hyperplane, then \eqref{eq:mcf} admits a smooth solution for all time with each $\mathcal{M}_{t}\doteqdot F(M^{n},t)$ remaining an entire graph. Moreover, in \cite{EckerHuisken1989}, they showed that if the initial hypersurface is Lipschitz with bounded gradient, then the corresponding solution to \eqref{eq:mcf} is unique. This picture was further developed by Daskolopoulos and Saez in \cite{DaskalopoulosSaez2023}. They proved uniqueness for entire graphical solutions with locally Lipschitz initial data in the case $n = 1$ (curve shortening flow) and, in higher dimensions, when the initial hypersurface is rotationally symmetric. They also established uniqueness for entire graphical solutions with a uniform lower bound on the second fundamental form. In particular, this class includes convex entire graphs, and it was subsequently used in \cite{BourniLangfordLynch2023} to prove uniqueness for general convex, locally uniformly convex solutions to mean curvature flow. 

Outside of these relatively rigid regimes, uniqueness, in general, fails and indeed there are examples of distinct solutions to \eqref{eq:mcf} with the same initial hypersurface, see, for example, \cite{AngenentIlmanenChopp1995}.

In this paper we study the possibility of constructing compact mean curvature flows with non-compact initial data. More precisely, let
 $\mathcal{M}_{0} \subset \R^{n+1}$ be a non-compact hypersurface. We seek a family of immersions $F:M^{n} \times (0,T) \to \R^{n+1}$ of a \emph{compact} manifold $M^{n}$ into $\R^{n+1}$ such that
\[
\frac{\partial}{\partial t}F(x,t) = \vec{H}(x,t) \,,\,\,\forall  (x,t) \in M^{n} \times (0,T)
\]
and, as $t\to 0$,  $\mathcal{M}_{t}$ converges to $\mathcal{M}_{0}$, in the following sense:
\begin{equation*}\label{C}
\tag{$\star$}
\begin{split}
\forall \varepsilon>0\,,\,\, \exists t_{\varepsilon} >0, \text{ such that }
\mathcal{M}_{t} \subseteq &B(\mathcal{M}_{0},\varepsilon)\,,\,\, \forall  t \in (0,t_{\varepsilon}),\\
\text{
where }
    B(\mathcal{M}_{0},\varepsilon) \doteqdot \{p \in \R^{n+1} &: \text{dist}(p,\mathcal{M}_{0}) <\varepsilon\}.
    \end{split}
\end{equation*}

\begin{remark}
    Although \eqref{C} and local uniform convergence are not equivalent, it will follow directly from the proofs of Theorems \ref{thm:main1} and \ref{thm:main2} that the constructed mean curvature flows will satisfy $\mathcal{M}_{t} \to \mathcal{M}_{0}$ locally uniformly in $\R^{n+1}$ as $t \to 0$.
\end{remark}

A solution of this form was first described by Ilmanen in \cite[Example 7.3]{Ilmanen1992} in the case $n = 1$, where it was used to demonstrate the non-uniqueness of the level set flow for non-compact sets. More detailed constructions in the case $n=1$ were later given by Bourni and Reiris in \cite{BourniReiris2024}. They proved the existence of a solution in the above sense under the assumptions that $\mathcal{M}_{0}$ is embedded, encloses a region of finite area $A_{0}$, and, sufficiently far out, can be represented as the union of graphs of two monotone functions that converge to zero at infinity, one from above and the other from below. These solutions converge to a round point at time $
T=\frac{A_{0}}{2\pi}$.
A different construction was also given by Peachey \cite{Peachey2022}, who constructed a smooth solution
$
\gamma:S^1\times(0,\infty)\to\mathbb{R}^{2}
$
emerging from a cusp asymptotic to $\pm(1+cx)^{-2}$. Since these examples have finite total curvature, the work of Chou and Zhu \cite{ChouZhu1998} implies the existence of a non-compact solution in the sense of \eqref{eq:mcf} that exists for a finite time
$
T=\frac{A_{0}}{\pi}$.

In this paper, we extend the construction done in \cite{BourniReiris2024} to higher dimensions for a class of rotationally symmetric initial conditions.
Roughly speaking, these initial conditions are obtained by taking a planar curve in the $x-y$ plane that encloses a finite area and rotating it about different axes. We refer to the resulting initial configurations as the \emph{raindrop}, \emph{UFO}, and \emph{stretched torus}. Their precise definitions are given in Definitions \ref{def:raindropIC}, \ref{def:UFOIC} and  \ref{def:stretchedtorusIC}, respectively. 
For the case $n=2$, these configurations are illustrated in Figures \ref{fig:foobar} and \ref{fig:storusR3}, respectively.

Our main results are the following.

\begin{theorem}\label{thm:main1}
    Let $\mathcal{M}_{0}$ be either a \emph{raindrop} or a \emph{UFO} hypersurface in $\R^{n+1}$.
    Then there exists a smooth mean curvature flow
    \[
        F:S^{n} \times (0,T)\to\R^{n+1},
    \]
    such that $\mathcal{M}_t$ converges to $\mathcal{M}_0$ as $t\to0$ in the sense of \eqref{C} and $\mathcal{M}_{t}$ converges to a point on the $x$-axis as $t \to T$. Additionally, $\tfrac{h^2}{2n}\leq T\leq\tfrac{A_0}{2\pi}$, where $h$ is the radius of a sphere enclosed by the initial condition and $A_0$ is the area enclosed by the profile curve of the initial condition (see also Definitions \eqref{def:raindropIC} and \eqref{def:UFOIC} for $h$ and $A_0$).
\end{theorem}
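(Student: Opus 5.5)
We construct the flow as a limit of compact rotationally symmetric flows, following the approach of \cite{BourniReiris2024} for curves. First, truncate the initial data. For each $R>0$ (or $k\in\mathbb{N}$), replace the profile curve of $\mathcal{M}_0$ outside $\{|x|\le R\}$ by a small cap that closes it off near the $x$-axis. The cap should keep the profile embedded and inside the region enclosed by $\mathcal{M}_0$, and the truncated curves should increase to it (or at least converge locally smoothly). This gives compact, embedded, rotationally symmetric hypersurfaces $\mathcal{M}_0^R\cong S^n$. Each of them encloses the ball $B_h$ of radius $h$ from Definition \ref{def:raindropIC} (resp.\ \ref{def:UFOIC}), and each has profile area $A_0^R\le A_0$. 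Standard theory gives a unique smooth compact flow $\mathcal{M}^R_t$, $t\in[0,T_R)$, which stays rotationally symmetric and embedded.

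Next come the bounds on $T_R$. By avoidance, $\mathcal{M}^R_t$ encloses the shrinking sphere $\partial B_{\sqrt{h^2-2nt}}$, so $T_R\ge h^2/(2n)$. For the upper bound, let $A(t)$ be the area of the region in the $x$–$y$ half-plane enclosed by the profile curve $\gamma_t$. The profile moves with normal speed $\kappa - (n-1)\frac{\langle\nu,e_y\rangle}{y}$. The rotational term pushes the curve inward where $\gamma_t$ faces away from the axis; with care about orientation (the UFO profile meets the axis), this gives $A'(t)\le-\int_{\gamma_t}\kappa\,ds = -2\pi$ on the portion off the axis, up to the contribution of the axis segment. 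Hence $T_R\le A_0^R/(2\pi)\le A_0/(2\pi)$. Finally, Huisken-type results for rotationally symmetric embedded spheres (no neckpinch, because the solution contains a ball and the profile stays graphical over the axis where needed) show that $\mathcal{M}^R_t$ shrinks to a point on the $x$-axis.

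The key step, and the main obstacle, is local estimates uniform in $R$. We need barriers showing that $\mathcal{M}^R_t$ stays in $B(\mathcal{M}_0,\varepsilon)$ for $t<t_\varepsilon$ with $t_\varepsilon$ independent of $R$. Inside, the sphere barrier controls things. Outside, we use the monotone-graph structure of the ends: shrinking spheres or Angenent-torus-type barriers placed in the complement of the region enclosed by $\mathcal{M}_0$, together with the fact that the thin ends are graphs of monotone functions tending to $0$. The ends then collapse toward the axis quickly but cannot escape outward. A Sturmian/intersection argument, or a gradient estimate for the ends as graphs $y=u(x,t)$, keeps the profile graphical there. Interior curvature estimates of Ecker–Huisken type then give smooth subsequential convergence as $R\to\infty$ on compact subsets of $\R^{n+1}\times(0,T)$ away from the axis. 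We expect the region near the axis, where the ends degenerate, to be the hard part. There we would try comparing with self-similarly shrinking cylinders $y=\sqrt{2(n-1)(\tau-t)}$ to show the limit flow is compact for every $t>0$: at any positive time the far-out parts have already vanished, because their radius is below $\sqrt{2(n-1)t}$.

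Last, we pass to the limit. Monotonicity in $R$ (nested initial data, so nested flows by avoidance) gives $T=\lim T_R$ with $h^2/(2n)\le T\le A_0/(2\pi)$. The limit $\mathcal{M}_t$ is a smooth compact embedded sphere flow on $(0,T)$. It satisfies \eqref{C} by the uniform barriers, and it converges to a point on the axis as $t\to T$ by the same argument as for the approximating flows, using the uniform-in-$R$ control near the extinction time.
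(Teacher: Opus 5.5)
Your overall plan (truncate, flow, estimate uniformly in the truncation, pass to a limit) is the paper's. However, the step you flag as the main obstacle is not supplied: that for every $t_*>0$ the truncated flows lie in one fixed compact set for $t\ge t_*$, uniformly in the truncation. For the UFO, your version of this step rests on a misreading of the geometry. The UFO profile is $x=\pm v(r)$ with $v\downarrow 0$ as $r\to\infty$. So the whole profile lies in $\{|x|\le d\}$, and the non-compact ends are two nearly parallel sheets far from the axis, at large $r=|y|$. Consequently:
\begin{itemize}
\item truncating outside $\{|x|\le R\}$ does not compactify $\mathcal{U}$ at all;
\item the ends are not thin tubes around the axis, so shrinking cylinders $|y|=\sqrt{2(n-1)(\tau-t)}$ say nothing about them;
\item what must be shown is that the tall thin spike of the truncated profile (height $\approx k$ at $x=0$ in the picture $r=u(x)$) retracts in a time independent of $k$. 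This is a one-dimensional curvature effect in which the rotational term is negligible.
\end{itemize}
Even for the raindrop the cylinder is not a barrier. The surface crosses it, and comparing on $\{x\ge X\}$ needs a priori control of $u(X,t)$ at the inner boundary, which is essentially what you want to prove. (Angenent tori threaded by the thin end could probably be made to work for the raindrop, but nothing like that is available for the UFO, whose ends are far from the axis.)

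The missing idea is that the rotational term only ever pushes the profile toward the axis: $u_t=\frac{u_{xx}}{1+u_x^2}-\frac{n-1}{u}\le\frac{u_{xx}}{1+u_x^2}$. By the maximum principle, each truncated profile therefore stays below the curve shortening flow of the same reflected profile. Lemma 7 of Bourni--Reiris for that planar flow then gives the uniform containment for raindrop and UFO at once (Lemma \ref{lem:compact1}). Your area computation for $T\le A_0/(2\pi)$ is fine if applied to the reflected closed curve: the profile stays a horizontal graph (Theorem \ref{thm:horizontal_gradient}), so the rotational term is inward everywhere.

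Two further steps need a concrete mechanism rather than an appeal to ``Huisken-type'' or ``Ecker--Huisken type'' results.

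\emph{Near the axis.} You leave this region open. There the bound $|u_x|\le\sigma(u,t)$ of Theorem \ref{thm:horizontal_gradient} blows up, and Ecker--Huisken's estimates depend on an initial gradient bound that is not uniform in the truncation (the thin raindrop end, written as $x=v(|y|)$, has $|v_r|$ unbounded as the truncation grows). The paper handles this in two steps. First, for $t<3h^2/(8n)$ the enclosed sphere still has radius $>h/2$, and Sturm's theorem keeps the unique critical point of $u(\cdot,t)$ out of $\{r<h/2\}$. So the polar caps remain vertical graphs over $\{|y|<h/2\}$. Second, it applies the Evans--Spruck estimate (Theorem \ref{thm:evans_spruck}), which depends only on the height; the height is bounded by the containment lemma.

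\emph{Extinction at a point.} ``Contains a ball, hence no neckpinch'' is not a valid criterion: a dumbbell contains a ball and still pinches. What is needed is that the profile is a horizontal graph with exactly one critical point, a maximum. Sturm's theorem preserves this, and it is the hypothesis of Theorem \ref{thm:point}. That is why the caps must be monotone for the raindrop and even with a single maximum for the UFO, which your truncation does not impose. For the limit flow, one applies Theorem \ref{thm:point} to $\mathcal{M}_{t_*}$ at a positive time $t_*$, where it is compact (precisely because of the containment lemma) with a still unimodal profile. No uniform-in-$R$ control near extinction is needed.
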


\begin{figure}[h!]
    \centering
    \subfigure[]{\includegraphics[width=0.4\textwidth]{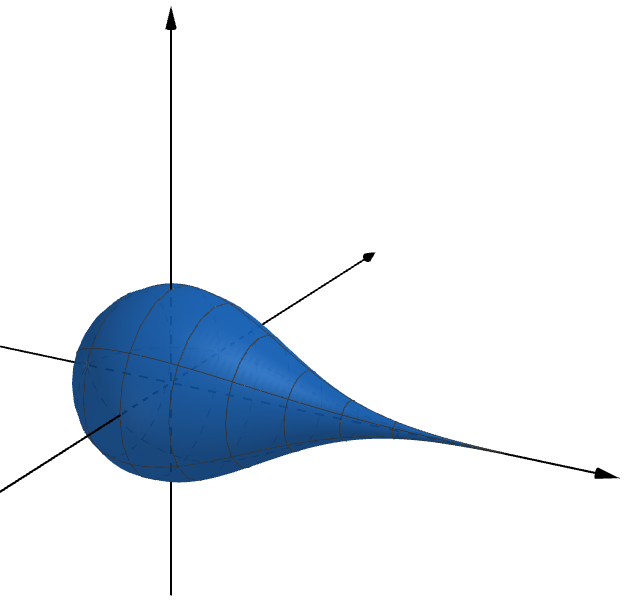}} \hspace{5em}
    \subfigure[]{\includegraphics[width=0.33\textwidth]{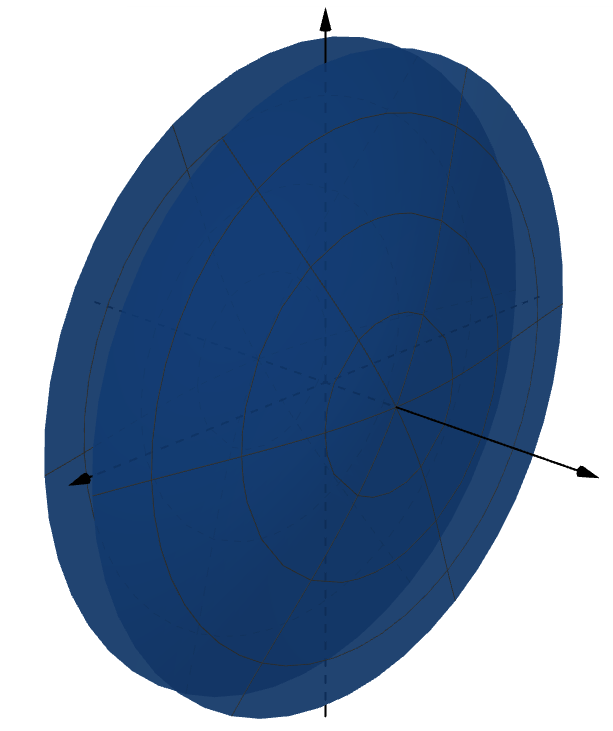}} 
    \caption{(a) Raindrop in $\R^3$ $\qquad\qquad$ (b) UFO in $\R^3$ }
    \label{fig:foobar}
\end{figure}

It will be apparent from the proof of Theorem \ref{thm:main1} that the upper bound on the time of existence weakens as $n$ increases.

\begin{theorem}\label{thm:main2}
    Let $\mathcal{M}_{0}$ be a \emph{stretched torus} hypersurface in $\R^{n+1}$. 
    Then there exists a smooth mean curvature flow
    \[
        F:S^1\times S^{n-1} \times (0,T) \to \R^{n+1},
    \]
    such that $\mathcal{M}_t$ converges to $\mathcal{M}_0$ as $t\to0$ in the sense of \eqref{C}. Additionally, $T\geq\min\{\tfrac{h^2}{2n},\tfrac{\ell^2}{2(n-1)}\}$, where $h$ is the radius of a sphere enclosed by the initial condition and $\ell$ is the distance between $\mathcal{M}_0$ and the axis of rotation (see also Definition \ref{def:stretchedtorusIC} for $h$ and $\ell$). 
\end{theorem}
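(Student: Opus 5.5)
We prove Theorem \ref{thm:main2} by a compact exhaustion and limit argument, following the scheme of \cite{BourniReiris2024} and its higher-dimensional analogue used for Theorem \ref{thm:main1}.

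\textbf{Approximating problems.} The stretched torus $\mathcal{M}_0$ is obtained by rotating a planar profile curve $\gamma_0$ in the half-plane $\{y>0\}$ about the $x$-axis. The curve encloses finite area, lies at distance $\ell$ from the axis, and outside a compact set consists of two monotone graphs converging to each other at infinity. For each $R$ large we cut $\gamma_0$ at $|x|=R$ and cap it off smoothly inside the region it bounds. This gives closed, embedded, $SO(n)$-symmetric profile curves $\gamma_0^R$. They increase in $R$ in the sense of enclosed regions, stay at distance at least $\ell$ from the axis, and all enclose a fixed $(n+1)$-ball $B_h$ of radius $h$ (the one guaranteed by Definition \ref{def:stretchedtorusIC}), together with a fixed solid torus whose meridian circles have radius $\ell' $ and whose core lies at distance $\ell$ from the axis.

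Rotating $\gamma_0^R$ gives compact embedded tori $\mathcal{M}^R_0\cong S^1\times S^{n-1}$, and standard theory gives smooth flows $\mathcal{M}^R_t$. By the avoidance principle these flows are nested in $R$. Each stays outside the shrinking cylinder $\{|(y,z)|^2 = \ell^2-2(n-1)t\}$ about the axis, which is a barrier from the inside of the hole. Each also encloses the shrinking sphere $\partial B_{\sqrt{h^2-2nt}}$. As long as both barriers survive, the torus can neither pinch its hole shut nor collapse its tube, so no singularity forms. More precisely, rotationally symmetric flows with embedded profiles can only become singular when the profile curve reaches the axis or the enclosed region disappears. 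This yields a uniform existence time $T_R\ge T_*:=\min\{h^2/(2n),\ell^2/(2(n-1))\}$.

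\textbf{Uniform compactness.} This is the main obstacle. By comparison with large shrinking spheres, or with the compact flows of Theorem \ref{thm:main1} enclosing $\mathcal{M}_0$ (a UFO-type hypersurface containing the torus), all $\mathcal{M}^R_t$ lie in a fixed compact set for $t\ge\delta>0$. Here we use the argument of \cite{BourniReiris2024}: far out, the profile is squeezed between graphs tending to zero, so the thin tails are swept in instantly. Concretely, we use small self-shrinking or Angenent-type barriers placed around the tail region to show that the tail region becomes empty for every positive time, with bounds independent of $R$. Given this confinement, we need curvature estimates uniform in $R$ on $[\delta,T_*-\delta]$. We would obtain them from Ecker--Huisken interior estimates applied in graphical regions. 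The profile is locally graphical by Sturmian intersection arguments, with the number of intersections with lines $x=c$ nonincreasing. Away from the axis, rotational symmetry reduces matters to the curve case, and the uniform distance from the axis controls the rotational curvature terms $\sim 1/y$.

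\textbf{Limit and properties.} By monotonicity the flows converge as $R\to\infty$, and Arzel\`a--Ascoli with the estimates gives a smooth limit $F:S^1\times S^{n-1}\times(0,T)\to\R^{n+1}$ with $T\ge T_*$. The limit is compact because of the uniform confinement. Condition \eqref{C} follows because each $\mathcal{M}^R_t$ is sandwiched between inner and outer barriers that converge to $\mathcal{M}_0$ as $t\to0$. The outer barrier is the flow of a slightly fattened initial surface, controlled by pseudolocality and local graphical estimates on compact pieces. In the tails, containment in shrinking neighbourhoods of the axis-parallel graphs converging to zero gives the $\varepsilon$-neighbourhood property uniformly.
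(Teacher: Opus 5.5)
\textbf{The missing step is the $R$-uniform curvature estimate.} This is the core of the paper's proof (Proposition \ref{prop:curvtor}), and you only gesture at it.

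Ecker--Huisken's estimates are not interior in time. They bound the slope at $(x_0,t)$ by the slope at an earlier time $t'$ on a ball of radius larger than $\sqrt{2n(t-t')}$. They also require the flow to stay graphical in one fixed direction on that ball for all of $[t',t]$.
\begin{itemize}
\item If you take $t'=0$, you need this graphicality on balls comparable to the tube itself for $t$ near $T_*$, and nothing in your argument provides it. You also need initial slope bounds on the tail in that direction. As a graph $r=u(x)$, its slope is $1/|v'|$, which the hypotheses do not control.
\item If you take $t'>0$, you need slope bounds at time $t'$, which is what you are trying to prove.
\end{itemize}
This is exactly the limitation the paper flags after Theorem \ref{thm:evans_spruck}.

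What is needed is a gradient estimate that is interior in time and depends only on height bounds, since height bounds are what confinement supplies. The paper uses two such estimates:
\begin{itemize}
\item Evans--Spruck on the vertical-graph part $\{C-\alpha h<r<C+\alpha h\}$;
\item the Altschuler--Angenent--Giga bound $|u_x|\le\sigma(u,t)$ (Theorem \ref{thm:horizontal_gradient}) on the horizontal graphs $u^k_1,u^k_2$ in the outer and inner regions, with $r\ge\alpha\ell$ from the cylinder barrier keeping $\sigma$ finite.
\end{itemize}
Sturm's theorem keeps this decomposition valid in time, and your intersection count with the lines $x=c$ is the right kind of input for that part.

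Saying that rotational symmetry ``reduces matters to the curve case'' does not close the gap. The Bourni--Reiris curve estimates rest on Sturm comparison with straight lines, which are not stationary for the profile equation when $n\ge 2$. Bounding the forcing term $(n-1)\nu_r/r$ gives no slope control by itself.

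\textbf{Geometry and confinement.} You have also misread where the non-compact end of $\mathcal{M}_0$ is. The profile is $x=\pm v(r)$ with $|x|\le L$ and $v(r)\to 0$ as $r\to\infty$, so the end is radial and asymptotic to the hyperplane $\{x=0\}$.
\begin{itemize}
\item Cutting at $|x|=R$ removes nothing. You must truncate in $r$, as in Definition \ref{def:slings}.
\item The tails are not ``axis-parallel graphs converging to zero''; that would contradict $\text{dist}\ge\ell$ from the axis.
\item The same confusion undermines your ``concrete'' confinement mechanism. The tail is a thin slab around $\{x=0\}$, not a thin tube around the axis, so no Angenent torus or small shrinker can be placed around it.
\item Large shrinking spheres give no $R$-independent bound.
\end{itemize}
The UFO comparison you mention is the right tool, and it is the paper's Lemma \ref{lem:compact2}. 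Its content comes from Lemma \ref{lem:compact1}: a UFO sling's horizontal graph function is a subsolution of graphical curve shortening flow. This reduces confinement to \cite[Lemma 7]{BourniReiris2024}.

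\textbf{Existence time.} You obtain $T\ge T_*$ from the cylinder and sphere barriers, plus the claim that singularities occur only when the profile reaches the axis or collapses. This is a reasonable alternative to the paper's route, which gets existence up to every $t_0<(1-\alpha^2)T_*$ from the curvature bounds themselves. However, your claim needs a reference or proof. It also does not supply the $R$-uniform estimates needed to pass to the limit.
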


\begin{figure}[h!]
    \centering
    \includegraphics[width=0.4\linewidth]{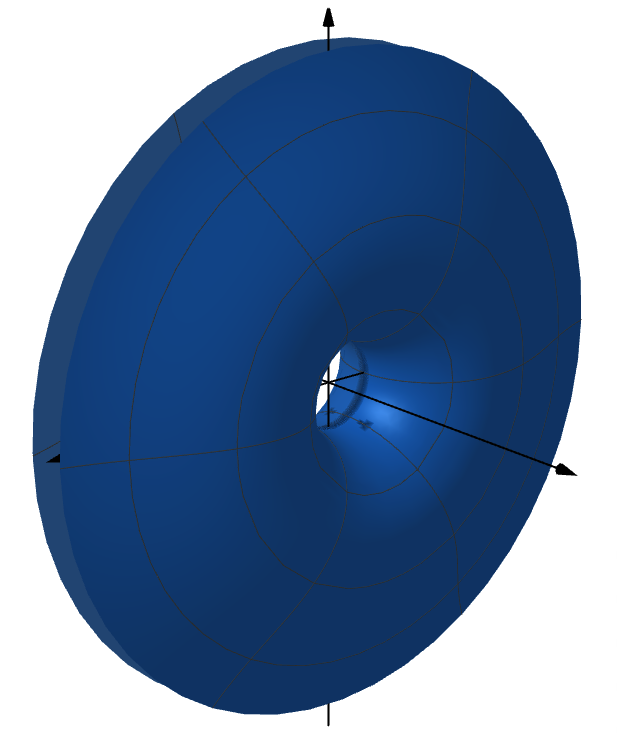}
    \caption{Stretched torus in $\R^3$}
    \label{fig:storusR3}
\end{figure}

\tocless{\section*}{Acknowledgements}

We would like to thank Ben Lambert for useful conversations on the topic. TB and NB were supported by NSF grant DMS-2405007. MF was supported by Discovery Project DP250103808 of the Australian Research Council.

\vspace{\baselineskip}

\section{Preliminaries}

Since we are interested in mean curvature flows of surfaces in $\R^{n+1}$ ($n \geq 2$) whose initial data are surfaces of revolution about the $x$-axis. We start by introducing some terminology and notation concerning surfaces a revolution.

\subsection{Surfaces of Revolution}

Let $\Sigma \in \R^{n+1}$ be a hypersurface which is invariant under rotations about the $x$-axis. If we denote points in $\R^{n+1}$ as $(x,y)$ where $x \in \R$ and $y \in \R^{n}$, then to $\Sigma$ we may associate a curve $\Gamma \subset \R^{2}_{+} \doteqdot \{(x,r) \in \R^{2} : r \geq 0\}$ (and vice-versa) via the following correspondence:
\begin{equation}\label{eq:correspondence}
    (x,r) \in \Gamma \iff (x,y) \in \Sigma \text{ for all } y \in \R^{n} \text{ with } |y| = r. 
\end{equation}
It is clear from this that $\Sigma$ is obtained by rotating the curve $\Gamma$ about the $x$-axis. Therefore, we shall refer to such a $\Sigma$ as a surface of revolution and to the corresponding curve $\Gamma$ as the profile curve.

\begin{defn}
    Let $\Sigma$ be a surface of revolution and let $\Gamma$ be the corresponding profile curve in $\R^{2}_{+}$. 
    \begin{itemize}
        \item[(i)] When an arc, $\Gamma$ can be written as the graph of a function $u:[a,b] \to \R_{+}$ over the $x$-axis, then we refer to the function $u(x)$ as a \emph{horizontal graph function} and its image as a \emph{horizontal graph}.
        \item[(ii)] When an arc, $\Gamma$ can be written as the graph of a function $v:[c,d] \to \R_{+}$ over the $r$-axis, then we refer to the function $v(r)$ as a \emph{vertical graph function} and its image as a \emph{vertical graph}.
    \end{itemize}
\end{defn}

A schematic of horizontal and vertical graphs is visualised in Figure \ref{fig:horizontal_vertical_graphs}. Given a one-parameter family of surfaces of revolution $\{\Sigma_{t}\}_{t \in [0,T)}$, then using the correspondence \eqref{eq:correspondence}, we denote the associated profile curves by $\{\Gamma_{t}\}_{t \in [0,T)}$. Furthermore, we will often switch between discussing the surface $\Sigma_{t}$ and the profile curve $\Gamma_{t}$ under the correspondence \eqref{eq:correspondence}.

\begin{figure}[h]
    \centering
    \includegraphics[width=0.6\linewidth]{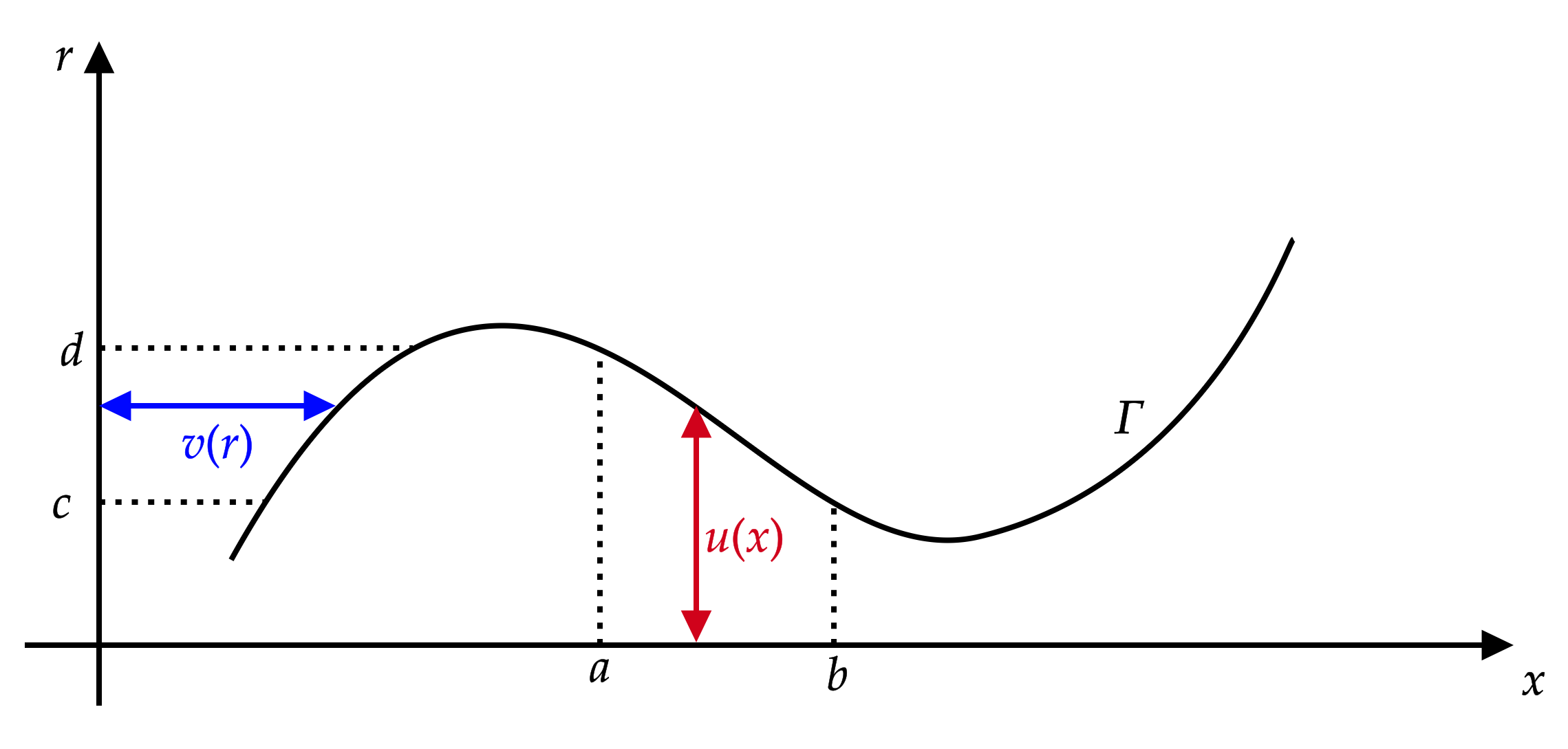}
    \caption{Horizontal and Vertical Graphs}
    \label{fig:horizontal_vertical_graphs}
\end{figure}

If a portion of $\Gamma$ can be represented as a horizontal graph function $u(x)$, then we may write the principal curvatures of $\Sigma$ solely in terms of $u$, $u_{x}$ and $u_{xx}$. To see this parametrise the corresponding portion of $\Sigma$ by $F:[a,b] \times S^{n-1} \to \R^{n+1}$; $(x,\varphi) \mapsto (x,u(x)\varphi)$.  Then, the mean curvature of $\Sigma$, with respect to the unit normal whose restriction on $\Gamma$ coincides with the upward pointing unit normal, is given by 
\begin{equation}\label{eq:mean_curvature_sum1}
    H = \kappa(x) + (n-1)\frac{1}{u(x)\sqrt{1 + u'(x)^{2}}},
\end{equation}
where $\kappa(s)$ is the curvature of the curve $\Gamma$ at the point $(x,u(x))$ with respect to the upward pointing unit normal. 

If instead we can write a portion of $\Gamma$ in terms of a vertical graph function $v(r)$, then we may parametrise the corresponding portion of the surface $\Sigma$ via the function $F:[0,a] \times S^{n-1} \to \R^{n+1}$; $(r,\varphi) \mapsto (r\varphi,v(r))$ and the mean curvature then takes the form
\begin{equation}\label{eq:mean_curvature_sum2}
    H = \kappa(r) - (n - 1)\frac{v'(r)}{r\sqrt{1 + v'(r)^{2}}}
\end{equation}
where again $\kappa(r)$ is the curvature of the curve $\Gamma$ at the point $(r,v(r))$ with respect to the upward pointing unit normal, and also $H$ is regarded with the corresponding normal.

\subsection{Mean Curvature Flow} 

Here we make formal the notation and terminology discussed in the introduction.

\begin{defn}
    Let $F_{0}:M^{n} \to \R^{n+1}$, be a smooth immersion of an $n$-dimensional manifold without boundary $M^{n}$. Then we say that a one-parameter family of smooth immersions $F:M^{n} \times [0,T) \to \R^{n+1}$ is a mean curvature flow with initial data $F_{0}$, if 
    \[
        \begin{cases}
            \displaystyle\frac{\partial F}{\partial t}(x,t) = \vec{H}(x,t) & (x,t) \in M^{n} \times [0,T) \\
            F(x,0) = F_{0}(x) & x \in M^{n}.
        \end{cases}
    \]
    where $\vec{H}(x,t)$ is the mean curvature vector of the immersion $F(x,t)$.
\end{defn}

We will often abuse terminology and refer to the resulting family of immersed hypersurfaces $\{\Sigma_{t}\}_{t \in [0,T)}$, where $\Sigma_{t} \doteqdot F(M^{n},t)$ as a mean curvature flow with initial data $\Sigma_{0}$. In the case that $M^{n}$ is compact, then there exists a unique such flow.

\begin{defn}
    Let $\Omega \subset \R^{n}, I\subset \R$ be  open domains and let $w : \Omega \times I \to \R$ be a one-parameter family of smooth functions, then one may  check that the graphs of $w(\cdot,t)$ evolve by mean curvature flow if and only if 
    \begin{equation}\label{eq:full_graph_equation} 
        \displaystyle\frac{\partial w}{\partial t} = \left(\delta^{ij} - \frac{w_{x_{i}}w_{x_{j}}}{1+ |Dw|^{2}}\right)w_{x_{i}x_{j}}.
    \end{equation}
    Equation \eqref{eq:full_graph_equation} is the standard graphical mean curvature flow equation, but we will refer to it here as the \emph{full graph equation} for reasons that would be clear by the subsequent definitions.
\end{defn}

If $\Sigma_{0}$ is a surface of revolution, then under the mean curvature flow the rotational symmetry will be preserved. Consequently, the corresponding family of hypersurfaces  $\{\Sigma_{t}\}$ can be represented by an evolving family of profile curves $\{\Gamma_{t}\}_{t \in [0,T)}$ in $\R^{2}_{+}$.

\begin{remark}
    Let $\{\Sigma_{t}\}_{t \in [0,T)}$ be a mean curvature flow where each $\Sigma_{t}$ is a surface of revolution.
    \begin{itemize}
        \item[(i)] If the associated profile curves $\Gamma_t$ can be written as a horizontal graph $u(x,t)$ over some fixed interval for a short time,  from \eqref{eq:mean_curvature_sum1} and \eqref{eq:full_graph_equation}, it is easily checked that $u(x,t)$ will satisfy
        \begin{equation}\label{eq:horizontal_graph_equation}
        \displaystyle\frac{\partial u}{\partial t} = \frac{u_{xx}}{1 + u_{x}^{2}} - \frac{n-1}{u}.
        \end{equation}
        We refer to equation \eqref{eq:horizontal_graph_equation} as the \emph{horizontal graph equation}.
        \item[(ii)] If the associated profile curves $\Gamma_{t}$ can be written as a vertical graph $v(r,t)$ over some fixed interval for a short time, from \eqref{eq:mean_curvature_sum2} and \eqref{eq:full_graph_equation}, it is easily checked that $v(r,t)$ will satisfy
        \begin{equation}\label{eq:vertical_graph_equation}
            \displaystyle\frac{\partial v}{\partial t} = \frac{v_{rr}}{1 + v_{r}^{2}} + (n-1)\frac{v_{r}}{r}. 
        \end{equation}
        We refer to equation \eqref{eq:vertical_graph_equation} as the \emph{vertical graph equation}.
    \end{itemize}
\end{remark}
A solution to the vertical graph equation \eqref{eq:vertical_graph_equation} on $[a,b]\times[0,T)$, $a\geq0$, induces a solution to the full graph equation \eqref{eq:full_graph_equation} on $\{y\in\R^{n}:|y|\in[a,b]\}\times[0,T)$ via \[
    w(y,t) = v\left(\sqrt{y_{1}^{2} + \cdots + y_{n}^{2}},t\right).
\]

\subsection{Evolution of Rotationally Symmetric Surfaces} We review some results concerning the evolution of surfaces of revolution. The mean curvature flow of surfaces of revolution 
were extensively studied by Altschuler, Angenent and Giga in \cite{AltschulerAngenentGiga1995}. Their main goal was to analyse the singularities that may develop under the flow and to investigate how the flow can be continued past such singularities. We state here the exact results from their work that we need in this paper. 

In the following, we consider $\Sigma_{0}$ to  be the surface of revolution obtained by rotating a curve $\Gamma_{0}$ about the $x$-axis and denote by $\{\Sigma_{t}\}_{t \in [0,T)}$ the corresponding evolution via mean curvature flow.

\begin{theorem}[\cite{AltschulerAngenentGiga1995} Lemma 5.5]\label{thm:point}
    If $\Gamma_{0}$ can be globally expressed as a horizontal graph $u(x)$ with one global maximum and no other interior critical points, then the family of hypersurfaces $\{\Sigma_{t}\}_{t \in [0,T)}$  converges to a point on the $x$-axis as $t \to T$.
\end{theorem}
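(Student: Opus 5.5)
The statement is Lemma 5.5 of \cite{AltschulerAngenentGiga1995}, so we outline how one would reprove it with tools available in this setting. Throughout, $u(\cdot,t)$ denotes the horizontal graph function of $\Gamma_t$. It is defined on an interval $[a(t),b(t)]$ and vanishes at the endpoints, where $\Gamma_t$ meets the axis perpendicularly.

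\textbf{Step 1: the graph structure persists.} First we show that $\Gamma_t$ stays a horizontal graph for all $t<T$. Differentiating the horizontal graph equation \eqref{eq:horizontal_graph_equation} in $x$ shows that $u_x$ satisfies a linear parabolic equation with no zeroth-order term that is singular at the axis. We then apply the Sturm--Angenent zero-counting (intersection) principle to $u_x$, or equivalently to intersections of $\Gamma_t$ with vertical lines $x=c$. This gives that the number of sign changes of $u_x$ is non-increasing. Hence $u(\cdot,t)$ keeps exactly one interior critical point, a maximum. Near the endpoints, where $u\to 0$, the curve is better described as a vertical graph solving \eqref{eq:vertical_graph_equation}. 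Smoothness there follows because the rotationally symmetric surface is smooth on the axis, so the profile curve cannot turn back.

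\textbf{Step 2: the singularity occurs only on the axis.} Next we rule out curvature blow-up away from the axis. Where $u\ge \delta>0$ the horizontal graph equation is uniformly parabolic. A gradient bound is also needed; we would obtain it from the one-critical-point structure by writing each monotone branch as a vertical graph with controlled slope, using intersection comparison with lines and cones. Interior estimates then make the flow smooth wherever $u$ stays bounded below. Therefore, at the maximal time $T$ we must have $\min u\to0$ at some point, so the surface pinches somewhere.

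\textbf{Step 3: the pinch is at a single point.} The main obstacle is to show that the whole surface shrinks to a point, rather than pinching a neck while a nontrivial piece remains. Since $u$ has no interior local minimum, a neck pinch, which requires a local minimum of $u$ between two bulges, cannot occur. We would make this precise as follows. The set $\{x: u(x,t)\ge\varepsilon\}$ is an interval because $u$ is unimodal. If $\max_x u(\cdot,t)$ stayed bounded below as $t\to T$, then combining this with Step 2 and the convexity-type control of the monotone branches, which we get by comparison with shrinking spheres and cylinders placed inside, would keep the solution smooth up to $T$. That contradicts maximality. Hence $\max u\to0$. Finally, $b(t)-a(t)\to0$. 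To see this, enclose the surface in a large sphere and compare with spheres centered on the axis: if the length stayed positive, a sphere centered on the axis inside the region between the endpoints would force $\max u$ to be positive. Combining these, $\Sigma_t$ converges to a point on the $x$-axis.
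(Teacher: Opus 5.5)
The paper gives no proof of this statement. It is quoted from Altschuler--Angenent--Giga and used as a black box, so your outline has to stand on its own, and as written it does not reach the conclusion.

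The decisive gap is the last step, $b(t)-a(t)\to 0$. In the regime $\max_x u(\cdot,t)\to 0$, any surface inscribed in $\Sigma_{t_1}$ at an earlier time has size at most $\max u(\cdot,t_1)$. By avoidance it must therefore be extinct by time $T$, so it cannot constrain the limit. Your large enclosing sphere only gives outer bounds. Nothing in your argument excludes the scenario in which the spindle thins out and collapses onto a nondegenerate segment $[a_T,b_T]$ of the axis. Unimodality does make the tips monotone: at the left tip, $x=w(y,t)$ has a minimum at $y=0$, so $\dot a=\Delta w(0,t)\ge 0$, and similarly $\dot b\le 0$. So the limits exist, but nothing forces them to coincide. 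Excluding this collapse needs a genuinely new ingredient, not supplied by sphere comparison. One option is a proof that singular points on the axis at the first singular time are isolated, since every interior point of such a segment would be singular. Another is a quantitative analysis of how fast the tips advance relative to the thinning of the tube.

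The step before it is also incomplete. To show that $\inf_t\max u(\cdot,t)>0$ forces smoothness up to $T$, you must control the tips. There $u\to 0$ and Theorem~\ref{thm:horizontal_gradient} gives nothing. Inscribed spheres give only $C^0$ containment, infinite cylinders cannot be inscribed in a compact spindle, and a priori a singularity could form at a tip while the bulk survives.

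The fix is the device the paper itself uses in Propositions~\ref{prop:curvrain}--\ref{prop:curvtor}. Suppose $\max u(\cdot,t)\ge 2\rho$ for all $t<T$. Then unimodality makes each component of $\Sigma_t\cap\{|y|<\rho\}$ a graph $x=w(y,t)$ over the fixed ball $\{|y|<\rho\}\subset\R^n$, solving the full graph equation with $\sup|w|$ bounded. Theorem~\ref{thm:evans_spruck} plus parabolic regularity then give uniform estimates there. Theorem~\ref{thm:horizontal_gradient} plus interior estimates handle the region $\{u\ge\rho/2\}$.

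Smaller points:
\begin{itemize}
\item The equation for $u_x$ does have a zeroth-order term, $\frac{n-1}{u^{2}}u_x$. It is harmless for Sturm's theorem away from the tips.
\item Critical points of $u$ correspond to tangencies with horizontal lines $r=c$, not to intersections with vertical lines $x=c$; the latter control graphicality.
\item In Step 2 you should simply invoke Theorem~\ref{thm:horizontal_gradient}. Bounding the slope of the monotone branches as vertical graphs bounds $|u_x|$ from below, not above.
\end{itemize}
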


This result will play an important role in what follows, as it provides a sufficient condition on the initial surface guaranteeing that the corresponding mean curvature flow develops no singularities before becoming extinct. In particular, this allows us to circumvent certain curvature issues that may arise a priori.

The next result we shall borrow concerns a gradient estimate to the horizontal graph equation \eqref{eq:horizontal_graph_equation}.

\begin{theorem}[\cite{AltschulerAngenentGiga1995} Theorem 4.3]\label{thm:horizontal_gradient}
If $\Gamma_{0}$ can be globally expressed as the graph of a function $r = u(x)$ over the $x$-axis, then so can the profile curves $\Gamma_{t}$ of $\Sigma_{t}$ for each $t \in [0,T)$. Furthermore, if we denote the corresponding solution to the horizontal graph equation \eqref{eq:horizontal_graph_equation} by $r = u(x,t)$ where $u:[\alpha(t),\beta(t)] \times [0,T) \to \R$, then there is a function $\sigma:\R_{+} \times \R_{+} \to \R$ such that 
\[
    |u_{x}(x,t)| \leq \sigma(u(x,t),t),
\]
for all $0 < t < T$, where $\sigma(u,t)$ is non-increasing in both variables and depends only on $\sup_{x \in (\alpha(0),\beta(0))}u(x,0)$.
\end{theorem}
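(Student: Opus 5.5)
The statement consists of two assertions: graphicality is preserved, and a gradient bound holds. I would prove them separately, using the Sturmian intersection principle for the first and a maximum-principle argument in the spirit of Altschuler--Angenent--Giga for the second.

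\emph{Preservation of graphicality.} For each $c\in\R$ the hyperplane $\{x=c\}$ is a stationary solution of mean curvature flow. Its profile is the vertical line $\{x=c\}$ in $\R^2_+$. By Angenent's Sturmian principle for rotationally symmetric flows, the number of intersections of $\Gamma_t$ with $\{x=c\}$ in the open half plane $r>0$ is non-increasing in $t$. Initially this number is at most one, because $\Gamma_0$ is a horizontal graph. Hence every vertical line meets $\Gamma_t$ at most once, transversally, for all $t$. Tangential contacts would drop the count strictly, so they cannot occur either. It follows that $\Gamma_t$ is a horizontal graph $u(\cdot,t)$ over an interval $[\alpha(t),\beta(t)]$, with $u$ vanishing at the endpoints where $\Gamma_t$ meets the axis. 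Away from the axis, $u$ then solves \eqref{eq:horizontal_graph_equation}.

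\emph{Gradient bound.} I would differentiate \eqref{eq:horizontal_graph_equation} to obtain the equation for $p=u_x$:
\[
p_t=\frac{p_{xx}}{1+p^2}-\frac{2p\,p_x^2}{(1+p^2)^2}+\frac{(n-1)p}{u^2}.
\]
The last term is a bad reaction term that can make $p$ grow near the axis. So instead of bounding $p$ alone, I would bound $p$ by a function of $(u,t)$. I look for $\sigma(u,t)$ such that the level-set comparison $\Phi=p-\sigma(u,t)$ cannot attain a first interior zero. At such a point we have $p=\sigma$ and $p_x=\sigma_u\,p$. Combining this with the two equations, we need $\sigma$ to satisfy a differential inequality of the schematic form
\[
\sigma_t \ \ge\ \frac{\sigma^2\sigma_{uu}+\cdots}{1+\sigma^2}+\frac{(n-1)}{u^2}\bigl(\sigma-u\sigma_u\bigr)+\cdots .
\]
I would try $\sigma(u,t)=\frac{C}{u\sqrt{t}}$, or a variant $\frac{A(t)}{u^{k}}$. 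Dependence on $t^{-1/2}$ is natural, since no initial gradient bound is assumed. This choice is decreasing in both variables, as the statement requires. The constant $C$ is to be chosen in terms of $M=\sup u(\cdot,0)$. The bound $u\le M$ for all $t$ follows from the maximum principle, because the reaction term $-(n-1)/u$ is negative.

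The barrier is infinite at $t=0$, so the comparison can be started. Interior touching must then be excluded, and near the endpoints $\alpha(t),\beta(t)$ there is no issue because $\sigma\to\infty$ as $u\to0$. The same argument applied to $-p$ handles negative slopes.

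\emph{Main obstacle.} The hard step is finding an explicit $\sigma$ for which the reaction term $(n-1)p/u^2$ is dominated at a touching point. If the simple ansatz fails, I would fall back on an alternative. One option is to bound the quantity $u\,p$, or $u^2(1+p^2)$, whose evolution removes the $1/u^2$ term; I would then use $u\le M$ to convert the result into a bound of the required form. A second option is a localized intersection argument with shrinking spheres centred on the axis, with radius comparable to $M$ and lifespan comparable to $M^2/(2n)$. The first option is easier to check, and I would attempt it first.
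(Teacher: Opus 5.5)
Your graphicality argument (Sturmian comparison with the stationary hyperplanes $\{x=c\}$) is the standard one and is fine, up to the usual care at the axis, where $\{x=c\}$ is tangent to $\Gamma_t$ at its endpoints. The gap is in the gradient bound, which is the real content of the statement. The paper does not reprove it: it quotes \cite{AltschulerAngenentGiga1995} and records that one may take $\sigma=e^{\rho(u)/t}$ with $\rho(u)\to\infty$ as $u\to0$.

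If you carry out your touching computation, at a first interior point where $p=u_x$ reaches $\sigma(u,t)$ you have $p=\sigma$, $p_x=\sigma\sigma_u$ and $p_{xx}\le\sigma^2\sigma_{uu}+\sigma\sigma_u^2$. A contradiction then requires
\[
\sigma_t>\frac{\sigma^2\sigma_{uu}}{1+\sigma^2}-\frac{2\sigma^3\sigma_u^2}{(1+\sigma^2)^2}+\frac{n-1}{u^2}\,(u\sigma)_u .
\]
Note that the reaction term is $(u\sigma)_u=\sigma+u\sigma_u$, not $\sigma-u\sigma_u$. Now take any $\sigma=c(t)/u$, in particular $C/(u\sqrt t)$. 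The reaction term vanishes identically, and the diffusion terms sum to $2\sigma^3/\bigl(u^2(1+\sigma^2)^2\bigr)>0$. So you need $c'>0$, and your ansatz fails for every $C$. The obstacle is therefore not the reaction term at all; it is that nothing lets $\sigma$ come down from $+\infty$.

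Your fallbacks do not repair this.
\begin{itemize}
\item Bounding $q=uu_x$ is the same computation in disguise. The $(n-1)$-terms cancel, and at a positive maximum $q_t\le 2p^3/\bigl(u(1+p^2)^2\bigr)\le 2/q$. This gives only $\sup q(t)^2\le\sup q(0)^2+4t$, which depends on the initial slope and on the tip curvature (where $uu_x=\pm1/\kappa$).
\item Powers $A(t)u^{-k}$ do no better. For $0<k<1$ every term on the right is positive. For $k>1$, the regime $\sigma\gg1$ forces $A'/A\gtrsim-(k-1)(k+n-1)/M^2$, hence $A(0^+)<\infty$.
\item The intersection fallback is not yet an argument, since a graph with $u\le M$ can meet a circle arbitrarily often at $t=0$.
\end{itemize}
So nothing you propose gives a bound depending only on $M=\sup u(\cdot,0)$. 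That dependence is exactly what the paper needs: in Proposition~\ref{prop:curvufo}, for example, the estimate is restarted at $t_1$ with no uniform slope control. Your claim that the tips are harmless is also false for a $1/u$ barrier, since $u_x\approx 1/(\kappa u)$ there as well.

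The missing idea is a barrier that blows up exponentially in $1/t$, and faster than $\log(1/u)$ at the axis: $\sigma=e^{\rho(u)/t}$. Then $\sigma_t=-\rho\sigma/t^2$ supplies the needed negative term. After multiplying by $t^2/\sigma$, the touching condition becomes
\[
\frac{\sigma^2(\sigma^2-1)}{(1+\sigma^2)^2}\,\rho'^2>\rho+\frac{\sigma^2}{1+\sigma^2}\,t\rho''+\frac{(n-1)t\rho'}{u}+\frac{(n-1)t^2}{u^2}.
\]
Take $\rho=Au^{-k}$. The $\rho'$ term has a good sign. You also have $u\le M$ and $t<T\le M^2/(2(n-1))$ from the enclosing cylinder, and $\sigma\ge e^{AM^{-k}/T}$ is large, so the prefactor on the left is close to $1$. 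Hence the left side, of order $k^2A^2u^{-2k-2}$, dominates every term on the right once $A=A(M,n,k)$ is large. The same growth of $\rho$ beats $1/(\kappa u)$ near the tips, so the touching point is interior. Finally, this $\sigma$ is non-increasing in both variables, as the statement requires.
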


\begin{remark}
    In \cite{AltschulerAngenentGiga1995} it is shown that the estimate in Theorem \ref{thm:horizontal_gradient} holds with $\sigma(u,t) = e^{\rho(u)/t}$ where $\rho(u)$ is a positive non-increasing function on $\R_{+}$ such that $\rho \overset{u \to 0}{\longrightarrow} \infty$. 
\end{remark}

We note that the estimate in Theorem \ref{thm:horizontal_gradient} blows up as $u \to 0$. Consequently, this theorem does not yield a global gradient estimate up to the axis of rotation. As this difficulty will arise in the sequel, we recall the following gradient estimate for the full graph equation due to Evans--Spruck \cite{EvansSpruck1992}, which will play a key role in overcoming it.

\begin{theorem}[\cite{EvansSpruck1992}]\label{thm:evans_spruck}
    If $w \in C^{3}(\overline{\Omega}_{T}) \cap C(\Omega_{T})$ is a solution to the full graph equation \eqref{eq:full_graph_equation} in a domain $\Omega_{T} \doteqdot B_{r}(p) \times (0,T)$, then for all $\epsilon > 0$ there exists a constant $C \doteqdot C(\sup|w|,\epsilon,n,r)$ for which
    \[
        |\nabla w| \leq C
    \]
    in $B_{r-\epsilon}(p) \times (\epsilon^{2},T)$. 
\end{theorem}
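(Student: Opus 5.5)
The plan is to run a maximum principle argument for the gradient function $v \doteqdot \sqrt{1+|Dw|^{2}}$ on the evolving graph, localised in space by an ambient cutoff and in time by a factor vanishing at $t=0$. This is Ecker--Huisken's interior estimate combined with a time-weighting, so that no information on the initial gradient is needed. Throughout, write $X=(x,w(x,t))$ for the position vector of the graph $\mathcal{G}_{t}$ of $w(\cdot,t)$, and $\Delta$, $\nabla$ for its induced Laplacian and gradient.

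\textbf{Step 1: evolution equations.} First record the standard identities along the flow:
\[
\Big(\tfrac{\partial}{\partial t}-\Delta\Big)v=-|A|^{2}v-2v^{-1}|\nabla v|^{2},
\qquad
\Big(\tfrac{\partial}{\partial t}-\Delta\Big)|X-X_{0}|^{2}=-2n .
\]
Here the second identity holds for any fixed point $X_{0}\in\R^{n+1}$. Translating so that $p=0$ and $w(0,\cdot)$ is centred, choose $X_{0}=(p,0)$ and put $M\doteqdot\sup|w|$.

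\textbf{Step 2: space--time cutoff.} Set
\[
\psi\doteqdot \Big(r^{2}-|x-p|^{2}-\big(w-\mu\big)^{2}\Big)_{+}, \qquad \mu\doteqdot -2M,
\]
or an analogous expression; the aim is that $\psi$ is positive on a neighbourhood of $B_{r-\epsilon}(p)\times\{w\}$ and vanishes near $\partial B_{r}(p)$. Because $(\tfrac{\partial}{\partial t}-\Delta)$ of a quadratic in $X$ is a constant, $\psi$ is a subsolution up to a bounded error $2n$.

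\textbf{Step 3: maximum principle.} Consider
\[
Q\doteqdot t\,\psi^{2}\,v .
\]
Then $Q=0$ at $t=0$ and $Q=0$ on the lateral boundary, so $Q$ attains its maximum at an interior point. At that point, $\nabla Q=0$ lets us replace $\nabla v$ by $-2v\psi^{-1}\nabla\psi$. The good term $-2v^{-1}|\nabla v|^{2}$ absorbs the cross term $\nabla\psi^{2}\cdot\nabla v$, leaving an inequality of the form
\[
0\le \psi^{2}v+t\big(C(n)\psi+C|\nabla\psi|^{2}\big)v .
\]
This naive inequality does not close by itself, since it gives no upper bound on $v$. To close it, use the $w$-dependence of the cutoff: the term $(w-\mu)^{2}$ contributes $-2|\nabla w|^{2}=-2(1-v^{-2})$ to $-\Delta\psi$, with a favourable sign. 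Replacing $v$ by $v\,g(w)$ with $g$ convex, as in Korevaar's trick, produces a negative term proportional to $v^{2}$. This forces $\psi^{2}v\le C(n,M,r)/t$ at the maximum.

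\textbf{Step 4: conclusion.} On $B_{r-\epsilon}(p)$ we have $\psi\ge c(\epsilon,r)>0$, and on the time interval $(\epsilon^{2},T)$ we have $t\ge\epsilon^{2}$. Hence the bound on $Q$ yields $|\nabla w|\le v\le C(M,\epsilon,n,r)$ there.

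The main obstacle is Step 3: producing the coercive negative term in $v$. A pure ambient cutoff only gives a bound relative to initial data, so the height-dependent weight must be tuned. I would first try $\psi=(1-|x-p|^{2}/r^{2})_{+}$ multiplied by $\exp(Kw)$ with $K\sim 1/M$, and check that the $|\nabla w|^{2}$ term dominates the error terms.
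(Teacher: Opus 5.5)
\textbf{There is a genuine gap in Step~3.} The paper does not prove this statement; it quotes it from Evans--Spruck. Your outline therefore has to stand on its own, and it does not: Step~3 is the whole content of the theorem, it is only asserted, and the closing mechanisms you propose fail.

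\emph{Why linear-in-$v$ quantities carry no information.} For $Q=\eta v$ with $\eta=\eta(X,t)$ \emph{any} function of position (height included) and time, the critical-point relation $\nabla v=-v\nabla\eta/\eta$ makes $-2\eta v^{-1}|\nabla v|^{2}-2\langle\nabla\eta,\nabla v\rangle$ vanish identically. At a positive interior maximum you are left with $\eta|A|^{2}\le\partial_t\eta-\mathrm{tr}_{T\Sigma_t}D^{2}\eta$, the ambient Hessian traced over the tangent plane. In this inequality the size of $v$ enters only through the tangent plane.
\begin{itemize}
\item Multiplying by $g(w)$ does not change this: $(\partial_t-\Delta)g(w)=-g''(w)(1-v^{-2})$ is bounded, so no term proportional to $v^{2}$ appears.
\item For your ambient cutoff, $(\partial_t-\Delta)\psi\equiv 2n$: the $|\nabla w|^{2}$ coming from $(w-\mu)^{2}$ is cancelled by the one coming from $|x-p|^{2}$. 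That $\psi$ is also not positive over $B_{r-\epsilon}(p)$ unless $M\ll r$, since $(w+2M)^{2}\ge M^{2}$.
\item The product $\eta=t\psi^{k}e^{Kw}$ fails as well. Where $\Sigma_t$ is steep in the radial direction, the gain $tK^{2}\psi^{k}e^{Kw}(1-v^{-2})$ must beat both $\partial_t\eta=\psi^{k}e^{Kw}$ and the loss $\tfrac{2k(n-1)}{r^{2}}t\psi^{k-1}e^{Kw}$ from $\mathrm{tr}_{T\Sigma_t}D^{2}\psi^{k}$. It does not when $t<K^{-2}$ or $\psi\lesssim k/(Kr)^{2}$, and a maximum there says nothing about $v$.
\end{itemize}

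\emph{Your target bound is false.} The inequality $t\psi^{2}v\le C(n,M,r)$ cannot hold. Take a smoothed step of height $2M$ in the $x_1$-direction, as steep as you like. Compare with the expanders emerging from its two corners, which approach their asymptotic rays at a Gaussian rate. This keeps the middle of the wall within $O(e^{-cM^{2}/t})$ of a vertical plane, so $|\nabla w|\gtrsim e^{cM^{2}/t}$ there. Any correct argument must produce a bound exponential in $1/t$.

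\textbf{The missing idea: a genuinely superlinear term.} Set $\lambda\doteqdot\log v$, so that $(\partial_t-\Delta)\lambda=-|A|^{2}-|\nabla\lambda|^{2}$. For $x_0\in B_{r-\epsilon}(p)$ use the additive, height-dependent cutoff $\phi=1-|x-x_0|^{2}/\epsilon^{2}+(w-M)/(4M)$ and set $Q=t\phi_+^{2}\lambda$. At a positive interior maximum, the critical-point relation turns $-\eta|\nabla\lambda|^{2}$ into $-4t|\nabla\phi|^{2}\lambda^{2}$, and you get
\[ 4t|\nabla\phi|^{2}(\lambda-2)\le\phi^{2}+4nt\phi/\epsilon^{2}. \]
Because $\phi$ contains $w$, you have $|\nabla\phi|\ge\tfrac{\sqrt{1-v^{-2}}}{4M}-\tfrac{2}{\epsilon v}\ge\tfrac{1}{6M}$ once $v\ge 41(1+M/\epsilon)$: the height direction survives tangential projection exactly when the graph is steep. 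This gives
\[ Q\le C(n)\big(M^{2}+t(1+M^{2}/\epsilon^{2}+\log(1+M/\epsilon))\big). \]
Since $\phi(x_0,\cdot)\ge\tfrac12$, it follows that
\[ \log v(x_0,t)\le C(n)\big(1+M^{2}/t+M^{2}/\epsilon^{2}+\log(1+M/\epsilon)\big), \]
which for $t\ge\epsilon^{2}$ is the statement.

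Alternatively, Korevaar's device $\eta=e^{K\phi}-1$, with a time term such as $-\sigma/t$ built into $\phi$ and $K\sim M^{2}(1/\sigma+n/\epsilon^{2})$, works within the class $\eta v$. It does so because the gain $K^{2}e^{K\phi}|\nabla\phi|^{2}$ does not vanish at $\partial\{\phi>0\}$. Your product $\psi\,e^{Kw}$ loses exactly this property.
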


 The constant $C$ is increasing in $\sup|w|$ and decreasing in the size of the domain. We remark here that for the purposes of this paper, this bound provides a noticeable advantage to the gradient bound in \cite[Theorem 3.1]{EckerHuisken1991}. The reason is that the latter estimate depends on a gradient bound on the initial surface.

\section{Families of Initial Conditions}

In this section, we define the three types of non-compact initial hypersurfaces that we will consider. We briefly describe their properties and comment on the existence of non-compact mean curvature flow solutions with them as initial conditions. Subsequently, we construct compact approximations of these initial conditions by intersecting them with increasingly larger balls and appropriately capping off the resulting boundaries to obtain compact hypersurfaces. 

\begin{defn}[Raindrop]\label{def:raindropIC}
    Consider a smooth function over the $x$-axis \begin{equation*}
        u:[-a,\infty)\to[0,b],
    \end{equation*} where $2a\geq b>0$, that satisfies the following: 
    \begin{itemize}
        \item[(i)] The graph of $u$ meets the $x$-axis orthogonally at $x = -a$, and the region bounded by the graph of $u$ and the $x$-axis has finite area equal to $\tfrac{A_0}{2}$.
        \item[(ii)] $u$ is monotonically increasing on $(-a,0)$ and monotonically decreasing on $(0,\infty)$.
        \item[(iii)] There exists $h\in (0,a)$, such that the semi-circle $\{(x, \sqrt{h^2-x^2}):x\in[-h,h]\}$ lies strictly under the graph of $u$.
    \end{itemize}
    We will denote the graph of $u$ by 
    \[
        \mathfrak{R} \doteqdot \text{\emph{graph}}(u),
    \]
    and refer to the surface $\mathcal{R}$ obtained by rotating $\mathfrak{R}$ about the $x$-axis as a \emph{raindrop hypersurface} (see Figure \ref{fig:RD_UFO_ICs}).
\end{defn}

Raindrop hypersurfaces $\mathcal{R}$ are connected, non-compact manifolds that are diffeomorphic to $\R^n$, which have unbounded curvature $x \to \infty$. Raindrop hypersurfaces can not arise as the initial condition to a non-compact mean curvature flow, which can be seen by considering Angenent tori which initially surrounds the end of the hypersurface.

\begin{defn}[UFO]\label{def:UFOIC}
    Consider a smooth function over the $r$-axis $$v:[0,\infty)\to[0,d],$$ where $d>0$, that satisfies the following: \begin{itemize}
        \item[(i)] The graph of $v$ meets the $x$-axis orthogonally and the region bounded by the graph of $v$, the $r$-axis and the $x$-axis has finite area equal to $\tfrac{A_0}{4}$.
        \item[(ii)] $v$ is monotonically decreasing on $(0,\infty)$.
        \item[(iii)] There exists $h \in (0,d)$ such that the semi-circle $\{(x, \sqrt{h^2-x^2}):x\in[-h,h]\}$ lies strictly under by the graphs of $v$ and $-v$.
    \end{itemize}
    We will denote the union of the graphs of $v$ and $-v$ by 
    \[
        \mathfrak{U} \doteqdot \text{\emph{graph}}(v) \cup \text{\emph{graph}}(-v)
    \]
    and refer to the surface $\mathcal{U}$ obtained by rotating $\mathfrak{U}$ about the $x$-axis as a \emph{UFO hypersurface} (see again Figure \ref{fig:RD_UFO_ICs}).
\end{defn}

\begin{figure}[h!]
    \centering
    \includegraphics[width=0.9\linewidth]{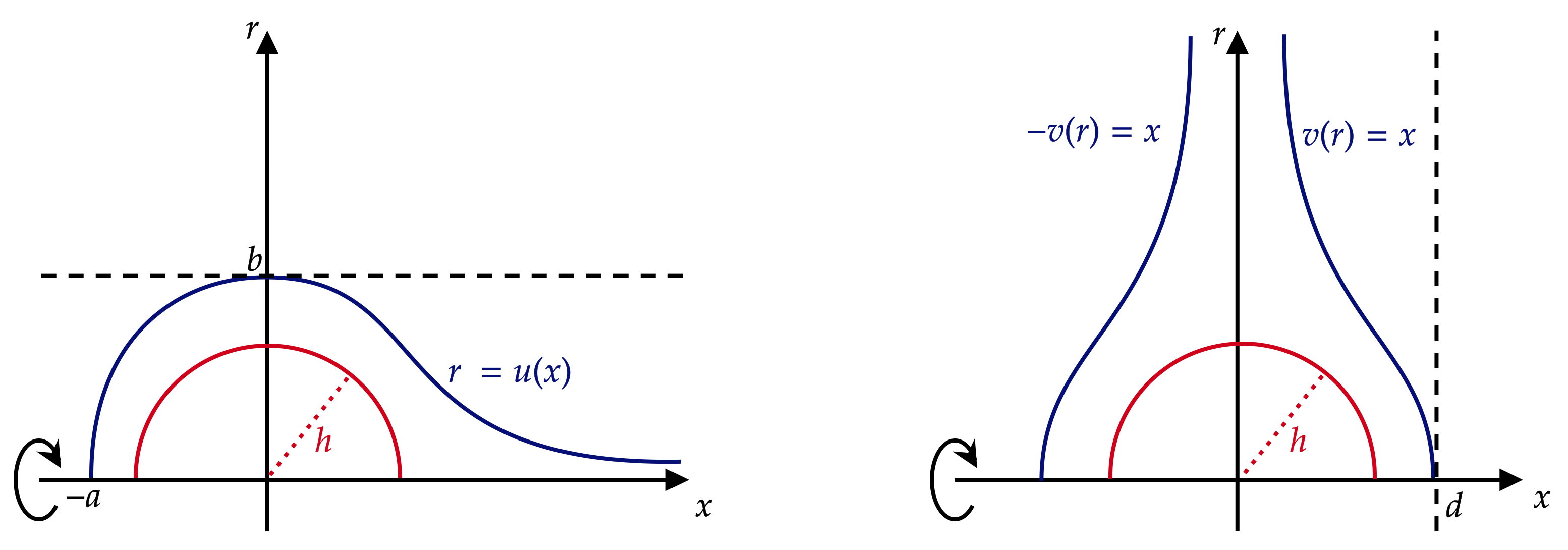}
    \caption{A raindrop (left) and a UFO (right)}
    \label{fig:RD_UFO_ICs}
\end{figure}

UFO hypersurfaces $\mathcal{U}$ are disconnected, non-compact manifolds whose components are diffeomorphic to $\R^n$. They have bounded curvature and are asymptotically flat as $r\to\infty$. Unlike raindrop hypersurfaces, UFO hypersurfaces do arise as initial conditions to a non-compact mean curvature flow. This is because both components of $\mathcal{U}$ may be written as entire Lipschitz graphs, from which the results of \cite{EckerHuisken1991} and \cite{DaskalopoulosSaez2023} imply the existence of a unique non-compact solution for all times.

\begin{defn}[Stretched Torus]\label{def:stretchedtorusIC}
    Consider a smooth function over the $r$-axis $$v:[\ell,\infty)\to[0,L],$$ where $\ell,L>0$, that satisfies the following: \begin{itemize}
        \item[(i)] $v$ meets the $r$-axis orthogonally and the region bounded by the graph of $v$ and the $r$-axis has finite area.
        \item[(ii)] There exists $C>\ell>0$ and $h>\tfrac{L}{2}>0$ such that $x=\sqrt{h^2-(y-C)^2}$ is strictly contained by the graph of $v$ and the $r$-axis.
        \item[(iii)] $v$ is monotonically increasing on $(\ell,C)$ and monotonically decreasing on $(C,\infty)$.
    \end{itemize}
    We will denote the union of the graphs of $v$ and $-v$ by 
    \[
        \mathfrak{T} \doteqdot \text{\emph{graph}}(v) \cup \text{\emph{graph}}(-v)
    \]
    and refer to the surface $\mathcal{T}$ obtained by rotating $\mathfrak{T}$ about the $x$-axis as a \emph{stretched torus hypersurface}.
\end{defn}

\begin{figure}[h!]
    \centering
    \includegraphics[width=0.65\linewidth]{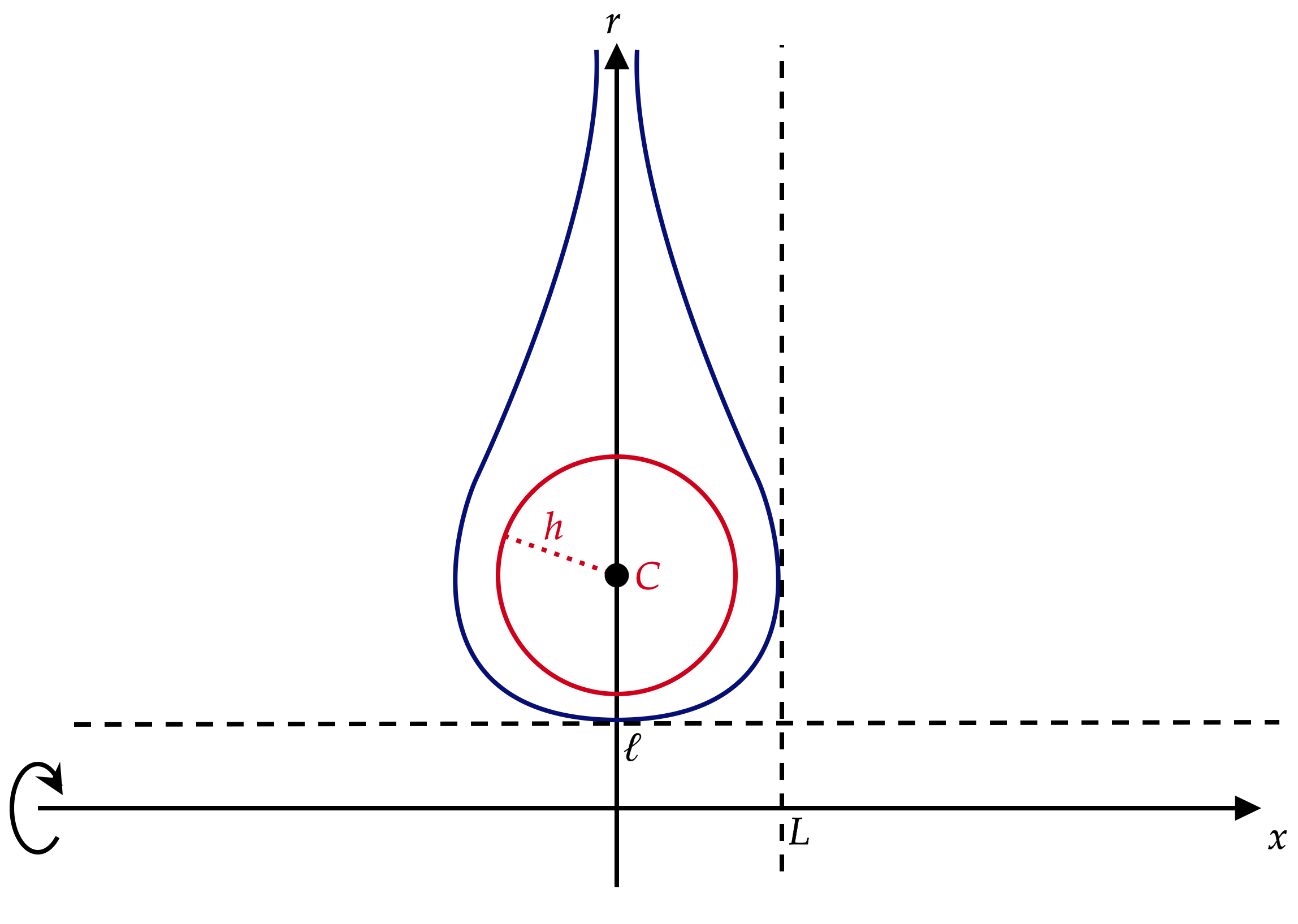}
    \caption{The Stretched Torus}
    \label{fig:StretchedtorusIC}
\end{figure}

The stretched tori are connected, non-compact manifolds with bounded curvature, looking asymptotically flat as $r\to\infty$ and topologically are an $S^{n-1}\times\R$. The stretched tori do arise as the initial condition to a non-compact mean curvature flow since it has bounded second fundamental form \cite[Theorem 4.2]{EckerHuisken1991}.

To establish the existence of compact mean curvature flows arising from these non-compact initial hypersurfaces, we consider suitable compact approximations obtained by truncating the initial hypersurfaces outside balls of increasingly large radius. For each compact approximation, we then consider the corresponding maximal mean curvature flow. The desired mean curvature flow will then be derived by taking a limit as the approximating hypersurfaces converge locally uniformly to the original non-compact hypersurface.

\begin{defn}[Compact Slings]\label{def:slings}
    Suppose $\Gamma$ is either $\mathfrak{R}$, $\mathfrak{U}$ or $\mathfrak{T}$ which contains the sphere $B_{h}(C)$, where $C = 0$ if $\Gamma$ is equal to $\mathfrak{R}$ or $\mathfrak{U}$ and $C > 0$ if $\Gamma = \mathfrak{T}$. For integers $j\geq \max\{2h,C\}$, consider the rectangle $\Box_j\doteqdot[-j,j]\times[0,j]$. Define the compact curve $\Gamma^j$ such that $\Gamma^j\cap\Box_j=\Gamma\cap\Box_j$, $\Gamma^j\cap\Box_{j+1}=\Gamma^j$ and 
    \begin{enumerate}
        \item $\Gamma^j\cap(\Box_{j+1}\setminus\Box_{j})$ can be represented as a monotone graph, over the $x$-axis, in the case that $\Gamma = \mathfrak{R}$,
        \item $\Gamma^{j} \cap (\Box_{j+1} \setminus \Box_{j})$ can be represented as a smooth even graph, over the $x$-axis, with a unique local maximum at $x = 0$ in the case that $\Gamma = \mathfrak{U}$ or $\Gamma = \mathfrak{T}$.
    \end{enumerate}
    We shall refer to the curves $\Gamma^{j}$ as \emph{compact slings}. In keeping with earlier notation, given a compact sling $\mathfrak{R}^{j}$, $\mathfrak{U}^{j}$ or $\mathfrak{T}^{j}$ we will write $\mathcal{R}^{j}$, $\mathcal{U}^{j}$ or $\mathcal{T}^{j}$ for the corresponding (compact) surfaces of revolution. By abuse of notation, we shall also refer to the corresponding compact surfaces of revolution as compact slings.

\end{defn}

The compact slings of the raindrop hypersurfaces \eqref{def:raindropIC} and the UFO hypersurfaces \eqref{def:UFOIC} can be entirely expressed via horizontal graphs with precisely one critical point being a global maximum. As such, by Theorem \ref{thm:point} the mean curvature flow starting from any of these compact slings exists and converge to a point in finite time. Conversely, the compact slings of stretched tori hypersurfaces \eqref{def:stretchedtorusIC} cannot be expressed a horizontal graph and so the long-time existence results of \cite{AltschulerAngenentGiga1995} do not apply, however, the curvature bounds we prove in the next section will prove existence for their compact slings in a uniform time-interval.

\begin{defn}
    We shall refer to the maximal family of hypersurfaces obtained by evolving a compact sling under the mean curvature flow as a \emph{slingshot}. If $\mathcal{R}^{j}$, $\mathcal{U}^{j}$ and $\mathcal{T}^{j}$ denotes a compact sling, then we write the corresponding slingshots as $\{\mathcal{R}^{j}_{t}\}_{t \in [0,T_j)}$, $\{\mathcal{U}^{j}_{t}\}_{t \in [0,T_j)}$ and $\{\mathcal{T}^{j}_{t}\}_{t \in [0,T_j)}$, respectively. Similarly, we shall write $\{\mathfrak{R}_{t}^{j}\}_{t \in [0,T_{j})}$, $\{\mathfrak{U}_{t}^{j}\}_{t \in [0,T_{j})}$ and $\{\mathfrak{T}_{t}^{j}\}_{t \in [0,T_{j})}$ for the corresponding evolving families of profile curves.
\end{defn}

 By Theorem \ref{thm:point} (\cite{AltschulerAngenentGiga1995}, Lemma 5.5), we know that the hypersurfaces corresponding to slingshots $\mathcal{R}_t^j$ and $\mathcal{U}_t^j$ converge to points in finite time. We have no such understanding of the long-time behaviour of the $\mathcal{T}_t^j$ slingshots.

We note that the list of non-compact initial conditions we consider here is by no means exhaustive. Rather, we present some model examples with rotational symmetry.

\section{Curvature Bounds}

In this section, we prove uniform curvature bounds for solutions to \eqref{eq:mcf} with initial data given by a compact sling, as in Definition \ref{def:slings}. For raindrop and UFO hypersurfaces, we utilise two techniques to prove these bounds. We first apply the estimates of Evans-Spruck \cite{EvansSpruck1992} to regions close to the axis of rotation and then the estimates of Altschuler-Angenent-Giga \cite{AltschulerAngenentGiga1995} to regions far away from the axis of rotation. The procedure for the stretched tori hypersurfaces is similar but requires three regions: one region close to the axis of rotation, one region far away from the axis of rotation, and an intermediate region which lies between these two. 

A key step in obtaining uniform curvature bounds over all compact slings arising from any initial condition is ensuring that for all small times $t>0$ they are all uniformly contained in a compact region. This is proven in the curve shortening flow case for certain initial conditions by Bourni and Reiris in  \cite[Lemma 7]{BourniReiris2024}. We recover such result in the case of the compact slings of raindrop and UFO hypersurfaces by observing that their profile curves will move with speed greater than their curvature. Heuristically, we argue that this additional speed will ensure that the compact slings will move into the same compact regions as the ones guaranteed in \cite[Lemma 7]{BourniReiris2024}, if they were to move via curve shortening flow instead.

\begin{lemma}\label{lem:compact1}
    For any decreasing sequence of times $t_j\downarrow0$, there exists a sequence of numbers $x_j$ and $r_j$, such that the following holds. After passing to a subsequence, the  slingshots,  $\mathfrak{R}^j_t$ and $\mathfrak{U}^{j}_t$,  satisfy 
    \begin{equation*}
        \mathfrak{R}^k_t\subset[-a,x_j]\times[0,b],\quad\forall k\geq j,\text{ and }t\geq t_j,
    \end{equation*} and
    \begin{equation*}
        \mathfrak{U}^k_t\subset[-d,d]\times[0,r_j],\quad\forall k\geq j,\text{ and }t\geq t_j,
    \end{equation*}
    respectively.
\end{lemma}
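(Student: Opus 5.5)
We compare with explicit barriers built from the curve shortening flow, using that the profile curves move faster than curve shortening flow in the relevant direction. For a horizontal graph $u$, the horizontal graph equation \eqref{eq:horizontal_graph_equation} gives
\[
u_t=\frac{u_{xx}}{1+u_x^2}-\frac{n-1}{u}\le \frac{u_{xx}}{1+u_x^2}.
\]
So $u$ is a subsolution of the graphical curve shortening flow. The same holds for the slingshots of $\mathfrak{R}$ and $\mathfrak{U}$, since their compact slings are global horizontal graphs and stay so by Theorem \ref{thm:horizontal_gradient}. By the comparison principle, if $\tilde u^k$ is the curve shortening flow (a graph over the $x$-axis, or the corresponding closed curve obtained by reflecting across the $x$-axis) starting from the same compact sling, then $u^k(\cdot,t)\le\tilde u^k(\cdot,t)$ for as long as both exist. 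It therefore suffices to trap the doubled curve shortening flows in the boxes claimed, uniformly in $k\ge j$ and $t\ge t_j$. This is the statement of \cite[Lemma 7]{BourniReiris2024}, applied to the reflected profile curves $\mathfrak{R}^k\cup(-\mathfrak{R}^k)$ and $\mathfrak{U}^k\cup(-\mathfrak{U}^k)$, which are embedded planar curves of the type considered there. Their tails are monotone graphs decaying to zero from above and below, and the enclosed area is at most $A_0$.

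The bounds that are free come first. We have $u^k\le b$ in the raindrop case, because the maximum of a solution to \eqref{eq:horizontal_graph_equation} is nonincreasing. For the same reason $|x|\le d$ on $\mathfrak{U}^k_t$: by the construction in Definition \ref{def:slings}, $|x|\le d$ initially, and the height $\sup u$ of the UFO sling, viewed as a horizontal graph, is nonincreasing. Also $x\ge -a$ for the raindrop, since the left tip can only move right; one can see this by comparing with the static vertical line, or with shrinking spheres on the left. The nontrivial bound is on the tail: $x\le x_j$ for raindrops and $r\le r_j$ for UFOs.

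The key step, and the main obstacle, is a uniform-in-$k$ control of the tail. I would reproduce the Bourni--Reiris argument directly in our setting rather than only quote it. Fix $t_j$. Since $u$ decreases to $0$ as $x\to\infty$, choose $X$ so large that $u\le\delta$ on $[X,\infty)$, with $\delta$ small relative to $\sqrt{t_j}$. On the tail the sling is then a thin region near the axis. Use as a barrier a family of ``grim reaper''/shrinking circle barriers, or more simply for $n\ge2$ the cylinder-type barrier in which the rotational term alone gives
\[
u_t\le -\frac{n-1}{u},
\]
so that $u^2$ decreases at rate at least $2(n-1)$ wherever the curvature term is controlled. In this way the part of the sling with $u\le\delta$ is pinched onto the axis before time $\delta^2/(2(n-1))<t_j$.

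To handle the curvature term rigorously, I would compare from above with a translated or rescaled family of self-shrinking or Angenent-type barriers placed over $[X,\infty)$ in the $(x,r)$ plane, for instance shrinking spheres/cylinders of radius $\sqrt{\delta^2-2n t}$. Alternatively, I would compare with the curve shortening flow barrier from \cite[Lemma 7]{BourniReiris2024}, which shows that the region where a monotone decaying tail of height at most $\delta$ survives past time $t_j$ is bounded, with the bound depending only on $\delta$ and the area. Since the slingshot is a graph whose tail lies under these barriers, $u^k(x,t)=0$, i.e.\ there are no points, for $x\ge x_j$ and $t\ge t_j$, and $x_j$ does not depend on $k$. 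The UFO case is the same argument with the roles of $x$ and $r$ exchanged, using that $v$ decreases to $0$ as $r\to\infty$. Passing to a subsequence is only needed to make $k\ge j$ compatible with $j\ge\max\{2h,x_j\}$, i.e.\ so that the slings agree with $\Gamma$ on the relevant box.
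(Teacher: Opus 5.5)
Your first paragraph is exactly the paper's proof: since $u_t\le \frac{u_{xx}}{1+u_x^2}$, the comparison principle puts each slingshot profile under the curve shortening flow of the same sling, and \cite[Lemma 7]{BourniReiris2024} then confines the latter uniformly in $k$. The extra material is not needed and is partly unsound as written. The inequality $u_t\le-\frac{n-1}{u}$ fails wherever $u_{xx}>0$, and shrinking spheres or cylinders placed over $[X,\infty)$ give no control at $x=X$. Also, $|x|\le d$ for the UFO slingshots comes from comparison with the static hyperplanes $\{x=\pm d\}$, not from monotonicity of $\sup u$. So you should simply rest on the citation, as the paper does.
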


\begin{proof}

    Any solution to the horizontal graph equation in $n$-dimensions is a sub-solution to the graph equation for curves evolving by curve shortening flow; \begin{equation*}
        u_t=\frac{u_{xx}}{1+u_x^2}-\frac{n-1}{u}\leq\frac{u_{xx}}{1+u_x^2}.
    \end{equation*} Therefore, the solution to the horizontal graph equation in $n$-dimensions will be bounded from above by the solution to the curve shortening flow. 
    
    Consider any raindrop or UFO compact sling $\mathfrak{R}^i_0, \mathfrak{U}_{0}^{i}$ at initial time $t=0$. Let $u^{i}(x,t)$ be the corresponding solution to the horizontal graph equation, and let $\tilde{u}(x,t)$ be the corresponding solution to the graphical curve shortening flow with the same initial data. Since $u^{i}(x,0) = 
    \tilde{u}(x,0)$, the maximum principle implies that $u^{i}(x,t) \leq \tilde{u}(x,t)$ for all $t \in [0,T)$. The result now follows from \cite[Lemma 7]{BourniReiris2024}.
    
\end{proof}

\begin{remark}\label{rem:alt_barrier}
    As an alternative barrier for the raindrop slingshots, one could use the complete graphical solutions defined on the punctured disk constructed in \cite{SaezSchnurer2014}, which in the interior of the disk become finite for any positive time (see Figure 3 in \cite{SaezSchnurer2014}).
\end{remark}

Proving an analogous result for the compact slings of the stretched torus initial condition requires more work since in that case the velocity of the profile curves will not move with speed greater than its curvature; a quick way to see this is to notice that in the vertical graph equation \eqref{eq:vertical_graph_equation}, the term $\tfrac{v_r}{r}$ changes sign for the stretched tori compact slings. To prove that these compact slings still enter compact regions in finite time, we utilise UFO compact slings as barriers.

\begin{lemma}\label{lem:compact2}
    For any decreasing sequence of times $t_j\downarrow0$, there exists a sequence of numbers $x_j$, such that, after passing to a subsequence, the slingshots $\mathfrak{T}_t^j$, satisfy \begin{equation*}
        \mathfrak{T}^k_t\subset[-L,L]\times[0,x_j],\quad\forall k\geq j,\text{ and }t\geq t_j.
    \end{equation*}
\end{lemma}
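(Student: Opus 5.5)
We argue by comparison: a UFO compact sling encloses the stretched-torus profile, and Lemma \ref{lem:compact1} already controls how far UFO slingshots reach. The stretched torus $\mathfrak{T}$ is the union of graphs of $\pm v$ over $[\ell,\infty)$ with $v\le L$. We choose a UFO profile $\mathfrak{U}$, given by a decreasing function $\tilde v:[0,\infty)\to[0,\tilde d]$ with finite area, such that $\tilde v\geq v$ on $[\ell,\infty)$. This is possible because $v$ is bounded by $L$ and, beyond $C$, decreasing with finite area. Concretely, we take $\tilde v(r)=\max\{v(s):s\geq r\}$ plus a small smoothed correction, or any smooth decreasing majorant of finite area, with $\tilde d\ge L$. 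After adjusting the truncations of Definition \ref{def:slings}, we may also arrange that each compact sling $\mathfrak{T}^k$ lies in the closed region bounded by the corresponding sling $\mathfrak{U}^k$, or by some fixed-index UFO sling $\mathfrak{U}^{k'}$ with $k'\ge k$.

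Next we use the comparison (avoidance) principle for mean curvature flow. The hypersurface $\mathcal{T}^k$ lies inside the region enclosed by $\mathcal{U}^{k}$, and $\mathcal{U}^k$ is a compact embedded sphere. Hence, as long as both flows exist, $\mathcal{T}^k_t$ stays inside the region enclosed by $\mathcal{U}^k_t$; one may need to first perturb $\mathcal{U}^k$ slightly outward so the containment is strict. By Lemma \ref{lem:compact1}, after passing to a subsequence, $\mathfrak{U}^k_t\subset[-\tilde d,\tilde d]\times[0,r_j]$ for all $k\ge j$ and $t\ge t_j$. Therefore $\mathfrak{T}^k_t$ lies in the same rectangle. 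Setting $x_j=r_j$, the radial bound follows. If the UFO slingshot $\mathcal{U}^k_t$ becomes extinct before $\mathcal{T}^k_t$, containment forces $\mathcal{T}^k_t$ to become extinct no later, so the inclusion holds vacuously or for every time at which $\mathcal{T}^k_t$ exists.

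The remaining task is the height bound $|x|\le L$ in place of $\tilde d$. For this we use horizontal hyperplanes $\{x=\pm L\}$ as static barriers. $\mathcal{T}^k$ lies in the slab $\{|x|\le L\}$, provided the caps in Definition \ref{def:slings} are chosen with $|x|\le L$, which is possible since the cap is a graph over the $x$-axis with maximum at $x=0$. The maximum principle then keeps $\mathcal{T}^k_t$ in that slab for all $t$.

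The main obstacle is constructing the enclosing UFO so that the containment $\mathcal{T}^k\subset\operatorname{int}\mathcal{U}^k$ holds uniformly in $k$, compatibly with the capping in Definition \ref{def:slings}. Near the caps at radius $j$ to $j+1$ the two slings must still be nested, and the majorant $\tilde v$ must satisfy the UFO hypotheses (orthogonality at $r=0$, monotonicity, finite area, enclosed ball) for Lemma \ref{lem:compact1} to apply. We would first choose $\tilde v$ as a decreasing majorant and use the freedom in the cap construction to make $\mathfrak{T}^k$'s cap lie below $\mathfrak{U}^k$'s cap. If that fails, we would compare $\mathfrak{T}^k$ with $\mathfrak{U}^{k+1}$ instead, which costs nothing since the conclusion of Lemma \ref{lem:compact1} is uniform over $k\ge j$.
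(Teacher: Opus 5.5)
Your proposal is correct and is essentially the paper's argument: enclose $\mathfrak{T}_0$ in a UFO profile, compare $\mathfrak{T}^k_0$ with $\mathfrak{U}^{k+1}_0$ via the avoidance principle, and read off the radial bound from Lemma \ref{lem:compact1}; your static-hyperplane argument for $|x|\le L$ fills in a step the paper leaves implicit. One detail to tighten: putting the cap of $\mathfrak{T}^k_0$ inside $\mathfrak{U}^{k+1}_0$ for an arbitrary admissible cap needs $\tilde v(k+1)>v(k)$, so choose a finite-area decreasing majorant with $\tilde v(r)>\sup_{s\ge r-1}v(s)$ rather than a small perturbation of $\sup_{s\ge r}v(s)$.
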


\begin{proof}
    For all stretched tori initial condition $\mathfrak{T}_0$, there exists a UFO initial condition $\mathfrak{U}_0$ which contains it. Therefore, for any $k\in\mathbb{N}$, $\mathfrak{T}_0^k$ is contained inside $\mathfrak{U}_0^{k+1}$. The result then follows by the avoidance principle, together with Lemma \ref{lem:compact1} applied to the flow $\mathfrak{U}^{k+1}_0$.
\end{proof}

In \cite{BourniReiris2024}, uniform curvature bounds are obtained by using a finite covering of ``basic rectangles''; rectangles where straight lines can be used, together with Sturm's theorem, to prove uniform gradient bounds. A similar method applied to the profile curves in higher dimensions would be significantly harder as straight lines are no longer stationary solutions. Instead, we shall make use of the gradient bounds of Evans-Spruck \cite{EvansSpruck1992} and gradient bounds for horizontal graphs, Theorem \ref{thm:horizontal_gradient}.
Let $A$ denote the second fundamental form of an immersed hypersurface.

\begin{proposition}\label{prop:curvrain}
    Suppose $\mathcal{R}_0$ is a raindrop hypersurface satisfying Definition 3.1. Let $0<t_{1} < t_0<\frac{3h^2}{8n}$. Then, for every integer $m\geq0$, there exists a constant $c_m$ depending only on the initial data, $t_{0}$ and $t_{1}$ such that
    \begin{equation}
        \sup_{p\in\mathcal{R}^{k}_t}|\nabla^mA(p)|^2\leq c_m,
    \end{equation} for all $t\in[t_1,t_0]$ and all $k$ sufficiently large. 
\end{proposition}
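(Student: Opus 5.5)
The plan is to get a uniform gradient bound on every profile curve $\mathfrak{R}^k_t$, $t\in[t_1,t_0]$, for $k$ large. Once the gradient is bounded, standard interior estimates for graphical mean curvature flow give bounds on $|\nabla^m A|$. Since the flows are rotationally symmetric, we split each $\mathcal{R}^k_t$ into two pieces. Near the axis of rotation we treat it as a full graph $w$ over a ball in $\R^n$ and apply Theorem \ref{thm:evans_spruck}. Away from the axis we treat it as a horizontal graph $u$ and apply Theorem \ref{thm:horizontal_gradient}.

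\textbf{Step 1: containment and a lower barrier.} By Lemma \ref{lem:compact1}, after passing to a subsequence, $\mathfrak{R}^k_t\subset[-a,x_1]\times[0,b]$ for all $t\ge t_1$ and $k$ large. This confines the whole flow to a fixed compact box, and in particular gives $\sup|w|\le x_1+a$.

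Next, the sphere of radius $h$ lies inside every $\mathcal{R}^k_0$, because $h<a\le j$. By the avoidance principle, $\mathcal{R}^k_t$ encloses the shrinking sphere of radius $\sqrt{h^2-2nt}$. For $t\le t_0<\tfrac{3h^2}{8n}$ this radius is at least $\sqrt{h^2-\tfrac34h^2}=h/2$. So for all $t\in[0,t_0]$ the surface stays outside the ball $B_{h/2}(0)$.

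\textbf{Step 2: the near-axis pieces.} The profile curve is a horizontal graph with a single maximum. Hence near each endpoint on the $x$-axis (the left tip near $x=-a$ and the right tip) the surface is a vertical graph $x=w(y)$ over a disk in $\R^n$. The key claim is that these disks have a uniform radius, comparable to $h/2$, over which the surface stays graphical for $t\in[0,t_0]$.

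For the left cap this should follow from monotonicity: the horizontal graph $u$ is increasing to the left of its maximum, and the enclosed ball of radius $h/2$ keeps the maximum height at least $h/2$. With that in hand, Theorem \ref{thm:evans_spruck} on $B_{h/2}\times(0,t_0)$, with $\epsilon$ chosen so that $\epsilon^2<t_1$ and $\epsilon<h/4$, bounds $|\nabla w|$ on $B_{h/4}$ for $t\in[t_1,t_0]$. The bound depends only on $x_1+a$, $h$, $n$ and $t_1$.

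The main obstacle is justifying this uniform graphicality of the tips over a disk of fixed radius for all times up to $t_0$. That is where the constraint $t_0<\tfrac{3h^2}{8n}$ enters. For the right tip the curve is monotone decreasing, so $u_x\le0$ there. To make the surface a vertical graph, I would write the monotone part of the profile curve as $x=v(r)$ on $r\in(0,\max u)$, using that $u(\cdot,t)$ has no interior critical points other than the maximum (this is preserved by Sturm-type arguments, as in \cite{AltschulerAngenentGiga1995}).

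\textbf{Step 3: the region away from the axis.} On the set where $u\ge h/4$, Theorem \ref{thm:horizontal_gradient} gives $|u_x|\le\sigma(h/4,t_1)$ for $t\ge t_1$. Here $\sigma$ depends only on $\sup u(\cdot,0)\le b$, which is uniform in $k$. The two regions overlap: a point with $u<h/4$ lies within the domain where Step 2 applies, since its radius is below $h/4$. So the surfaces are locally uniformly graphical with bounded gradient throughout $[t_1,t_0]$.

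\textbf{Step 4: higher derivatives.} Apply the Ecker--Huisken interior estimates for graphical mean curvature flow with bounded gradient on balls of fixed size. Shrinking $t_1$ slightly to absorb the time cutoff, these yield $|\nabla^mA|^2\le c_m$ uniformly in $k$.
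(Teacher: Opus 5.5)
Your proposal is correct and follows essentially the same route as the paper: Sturm's theorem together with the enclosed sphere of radius $>h/2$ keeps the two caps in $\{r<h/2\}$ vertical graphs, Evans--Spruck controls them there, Theorem \ref{thm:horizontal_gradient} handles $\{u\ge h/4\}$, and standard higher-order estimates finish the argument (you use Ecker--Huisken interior estimates where the paper cites parabolic regularity theory). One small correction: Lemma \ref{lem:compact1} bounds $|w|$ uniformly in $k$ only for $t$ bounded away from $0$, since the right cap starts near $x\approx k$, so you should apply Evans--Spruck on a window such as $(t_1/2,t_0)$ with $\epsilon^2<t_1/2$ rather than on $(0,t_0)$.
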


\begin{proof}
    The choice $0<t_0<\tfrac{3h^2}{8n}$, together with the avoidance principle, ensures that for all $t\in[0,t_0]$ the interior sphere of $\mathcal{R}_{t}^{k}$ (as given in part (iii) of Definition \ref{def:raindropIC}), has radius greater than $\tfrac{h}{2}$.
    Let $t_{1} \in (0,t_{0})$. By construction,   $\mathcal{R}_{t}^{k}\cap\{r=|y| <\tfrac{h}{2}\}$ has two connected components, each of which can be expressed as the  graph of a function of the form
    \begin{equation*}
        w^k(z_1,\dots,z_n,t)\doteqdot v^k(\sqrt{z_1^2+\cdots+z_n^2},t),
    \end{equation*} 
    where $v^k:[0,\tfrac{h}{2}]\times[t_1,t_0]\to\R_+$ is the corresponding solution to the vertical graph equation. The  profile curves $\mathfrak{R}_{t}^{k}$ do remain vertical graphs in the region ${r < h/2}$ for times $t \le t_0$, and therefore the above representation is well defined. 
    
    To see this, notice that by Theorem \ref{thm:horizontal_gradient}, the profile curve can be written globally via a solution to the horizontal graph equation $u^{k}(x,t)$, which at the initial time has only one critical point, namely the maximum. By Sturm's theorem the number of critical points of $u^k(\cdot,t)$ cannot increase in time.
    Consequently, the evolving profile curve continues to have a unique critical point which is necessarily the maximum, and by our choice of $t_0$, the maximum lies outside the region $r<h/2$. Hence the profile curve remains a vertical graph throughout this region. 
    
    The function $w^k$ is a solution to the full graph equation on $\{y\in\R^n:|y|<\tfrac{h}{2}\}\times[t_1,t_0]$ and
    by Lemma \ref{lem:compact1} is uniformly bounded for all sufficiently large $k$. Therefore, the estimates of Evans-Spruck (Theorem \ref{thm:evans_spruck}) provide uniform, in $k$, bounds on all higher derivatives of $w^k$, which depend only on $h$ and $\sup|w^k|$. 
    
    In the region $\{r > \frac{h}{4}\}$, by construction, the horizontal graph functions $u^k(x,t)$ corresponding to the compact slings $\mathfrak{R}^k_{t}$ satisfy $\sup(u^k(x,0),0)\leq b$ for all $k$. Therefore, by Theorem \ref{thm:horizontal_gradient} there exists some $\sigma:\R^+\times\R^+\to\R^+$ depending only on $b$, for which \begin{equation*}
        |u_x^k(x,t)|\leq\sigma(u^k(x,t),t),
    \end{equation*} for all $k$. Recall that $\sigma$ is non-increasing in its arguments, and therefore, for all $k$, $|u^k_x(x,t)|\le \sigma(\tfrac{h}{4},t_1)$, whenever throughout the region $\{r > \frac{h}{4}\}$ and $t\in [t_1,t_0]$. By the standard regularity theory for parabolic PDE's (see \cite[Chapter III, Theorem 12.1]{LadyzhenskayaSolonnikovUraltseva1968}), we obtain uniform, in $k$, bounds on all higher order derivatives of $u^{k}$. Since the second fundamental form of $\mathcal{R}_{t}^{k}$ can be expressed in terms of the functions $w^{k}$ and $u^{k}$, and their derivatives, we obtain the result.
    
\end{proof}

Uniform curvature bounds for the UFO compact slings are obtained in a similar way, with more care required away from the axis of rotation.

\begin{proposition}\label{prop:curvufo}
    Suppose $\mathcal{U}_0$ is a UFO hypersurface satisfying Definition \ref{def:UFOIC}. Let $0<t_{1}<t_0<\frac{3h^2}{8n}$. Then, for every integer $m\geq0$, there exists a constant $c_m$ depending only on the initial data, $t_{0}$ and $t_{1}$ such that\begin{equation}
        \sup_{p\in\mathcal{U}^k_t}|\nabla^mA|^2\leq c_m,
    \end{equation} for all $t\in[t_1,t_0]$ and all $k$ sufficiently large.
\end{proposition}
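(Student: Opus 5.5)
The plan is to follow the proof of Proposition \ref{prop:curvrain}: split each $\mathcal{U}^k_t$ into a region near the axis of rotation, $\{r<\tfrac{h}{2}\}$, and a region away from it, $\{r>\tfrac{h}{4}\}$. Near the axis I would use Evans--Spruck (Theorem \ref{thm:evans_spruck}); away from it I would use the Altschuler--Angenent--Giga estimate (Theorem \ref{thm:horizontal_gradient}). The new difficulty is in the outer region. For the UFO slings, the horizontal graph function $u^k(\cdot,0)$ has $\sup u^k(\cdot,0)\sim k$. So applying Theorem \ref{thm:horizontal_gradient} from time $0$ would give a gradient bound that is not uniform in $k$. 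The fix is to restart the flow at a positive time.

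\textbf{Step 1 (structure near the axis).} By Definition \ref{def:slings}, the profile $\mathfrak{U}^k_0$ is an even horizontal graph whose only critical point is a maximum at $x=0$. Theorem \ref{thm:horizontal_gradient} keeps it a horizontal graph. Sturm's theorem, together with the preservation of evenness, keeps $x=0$ as its unique critical point, which is the maximum. The sphere of radius $h$ from Definition \ref{def:UFOIC}(iii) shrinks by MCF with radius $\sqrt{h^2-2nt}>\tfrac h2$ for $t<\tfrac{3h^2}{8n}$. By avoidance it stays enclosed, so $u^k(0,t)>\tfrac h2$. Hence $\mathcal{U}^k_t\cap\{r<\tfrac h2\}$ consists of two components, each a graph $x=\pm w^k(y,t)=\pm v^k(|y|,t)$ over the disc $\{|y|<\tfrac h2\}$. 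By construction and avoidance (or by Lemma \ref{lem:compact1}), $|w^k|\le d$. Theorem \ref{thm:evans_spruck}, applied on $\{|y|<\tfrac h2\}\times(t_1/2,t_0]$ with $\epsilon$ small, gives gradient bounds on $\{|y|<\tfrac{3h}{8}\}\times[t_1,t_0]$ that are uniform in $k$. Standard parabolic regularity then bounds all higher derivatives, and hence $|\nabla^mA|$ in this region.

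\textbf{Step 2 (away from the axis; the main obstacle).} Apply Lemma \ref{lem:compact1} to a sequence $t_j\downarrow0$ with some $t_{j_0}\le t_1/2$. This gives $r_{j_0}$ such that, for all $k$ large, $\mathfrak{U}^k_t\subset[-d,d]\times[0,r_{j_0}]$ for $t\ge t_1/2$. Now regard $\mathfrak{U}^k_{t_1/2}$ as new initial data for the horizontal graph equation. It is still a global horizontal graph and satisfies $\sup u^k(\cdot,t_1/2)\le r_{j_0}$ uniformly in $k$. Theorem \ref{thm:horizontal_gradient}, applied to the time-shifted flow, then yields a function $\sigma$ depending only on $r_{j_0}$ such that $|u^k_x(x,t)|\le\sigma(u^k,t-\tfrac{t_1}{2})$. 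By monotonicity of $\sigma$, on $\{r>\tfrac h8\}$ and for $t\in[\tfrac{3t_1}{4},t_0]$ this gives $|u^k_x|\le\sigma(\tfrac h8,\tfrac{t_1}{4})$. The horizontal graph equation is uniformly parabolic there, with the lower-order term $\tfrac{n-1}{u}$ bounded because $u\ge\tfrac h8$. Interior estimates from \cite[Chapter III, Theorem 12.1]{LadyzhenskayaSolonnikovUraltseva1968} followed by bootstrapping then give $k$-uniform bounds on all derivatives of $u^k$ in $\{r>\tfrac h4\}\times[t_1,t_0]$. The extra margin in $r$ and in $t$ is there so that these interior estimates apply.

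\textbf{Step 3 (conclusion).} The regions $\{r<\tfrac{3h}{8}\}$ and $\{r>\tfrac h4\}$ cover $\mathcal{U}^k_t$. On each of them, $|\nabla^mA|^2$ is a universal expression in the derivatives of $w^k$ or $u^k$, and in $1/u$ where $u\ge\tfrac h4$. Combining the bounds from Steps 1 and 2 gives constants $c_m$ depending only on the initial data, $t_0$ and $t_1$. The step I expect to need the most care is Step 2: one has to check that Theorem \ref{thm:horizontal_gradient} can be restarted at time $t_1/2$, using only the uniform height bound from Lemma \ref{lem:compact1}, and that this bound holds for all $k$ sufficiently large after passing to the subsequence.
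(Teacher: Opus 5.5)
Your proposal is correct and follows essentially the same route as the paper. Near the axis you use Evans--Spruck on the two graphs over $\{|y|<\tfrac h2\}$, which stay graphical by the shrinking interior sphere and Sturm's theorem and satisfy $|w^k|\le d$; away from the axis you use the Altschuler--Angenent--Giga gradient estimate, made uniform in $k$ by restarting the flow at a positive time where Lemma \ref{lem:compact1} gives a uniform height bound. Restarting at $t_1/2$, so that the time argument of $\sigma$ stays $\ge t_1/4$ on $[t_1,t_0]$, is slightly more careful than the paper, which takes the height bound at $t_1$ itself and then quotes $\sigma(\tfrac h4,t_1)$, glossing over the fact that the estimate restarted at $t_1$ degenerates as $t\downarrow t_1$.
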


\begin{proof}

    As in the proof of Proposition \ref{prop:curvrain}, our choice of $t_{0} < \frac{3h^{2}}{8n}$ ensures that for each $\mathcal{U}^{k}_{t}$, the interior sphere has radius greater than $\frac{h}{2}$ for all $t \in [0,t_{0}]$ in accordance with the avoidance principle. Let $t_{1} \in (0,t_{0})$. By construction, $\mathcal{U}_{t}^{k} \cap \{r = |y| < \frac{h}{2}\}$ has two connected components, each of which can be expressed as the graph of a function of the form 
    \[
        w^{k}(z_{1},\cdots,z_{n},t) \doteqdot v^{k}(\sqrt{z^{2}_{1} + \cdots + z^{2}_{n},t}),
    \]
    where $v^{k}:[0,\frac{h}{2}] \times [0,t_{0}] \to \R_{+}$ is the corresponding solution to the vertical graph equation. Indeed, the profile curves $\mathfrak{U}_{t}^{k}$ can be globally written via a solution to the horizontal graph equation $u^{k}(x,t)$ by Theorem \ref{thm:horizontal_gradient}, which at the initial time possesses a single critical point, a maximum at $x = 0$. Therefore, arguing as in Proposition \ref{prop:curvrain}, an application of Sturm's theorem implies that the family of profile curves remain a vertical graph throughout the region $\{r < \frac{h}{2}\}$.

    The function $w^{k}$ is a solution to the full graph equation equation on $\{y \in \R^{n}: |y| < \frac{h}{2}\} \times [0,t_{0}]$, and by construction satisfies $w^{k}(x,t) \leq d$. Therefore, the estimates of Evans-Spruck (Theorem \ref{thm:evans_spruck}) provide uniform bounds on the higher derivatives of $w^{k}$, which depend only on $h$ and $d$.

    In the region $\{r > \frac{h}{4}\}$, by Lemma \ref{lem:compact1} there exists a constant $D > 0$ such that the horizontal graph functions $u^{k}(x,t)$ corresponding to the compact slings $\mathfrak{U}_{t}^{k}$ satisfy $\sup u(x,t_{1}) \leq D$ for sufficiently large $k$. Hence, by Theorem \ref{thm:horizontal_gradient}, there exists some 
    $\sigma:\R^{+} \times \R^{+} \to \R^{+}$ depending only on $D$ for which 
    \[
        |u_{x}^{k}(x,t)| \leq \sigma(u^{k}(x,t),t),
    \]
    for all sufficiently large $k$ and $t \in [t_{1},t_{0}]$. Since $\sigma$ is non-increasing in its arguments, we have that 
    $|u^{k}_{x}(x,t)| \leq \sigma\left(\frac{h}{4},t_{1}\right)$ throughout the region $\{r > \frac{h}{4}\}$ for all sufficiently large $k$ and $t \in [t_{1},t_{0}]$. 
    Precisely as in Proposition \ref{prop:curvrain}, after an application of the regularity theory for parabolic PDE's, we obtain the result.
       
\end{proof}

Finally, we obtain the uniform curvature estimates for the stretched tori.

\begin{proposition}\label{prop:curvtor}
    Suppose $\mathcal{T}_0$ is a stretched torus hypersurface satisfying Definition \ref{def:stretchedtorusIC}. Let $0<t_{1} < t_0<(1-\alpha^{2})\min\{\frac{h^2}{2n},\frac{\ell^2}{2(n-1)}\}$ for any \(\alpha \in (0,1)\). Then, for every integer $m\geq0$, there exists a constant $c_m$ depending only on the initial data, $t_{0}$ and $t_{1}$ such that
    \begin{equation}
        \sup_{p\in\mathcal{T}^k_t}|\nabla^mA|^2\leq c_m,
    \end{equation} for all $t\in[t_1,t_0]$ and all $k$ sufficiently large.
\end{proposition}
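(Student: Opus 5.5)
Following the strategy announced at the start of this section, I will split the profile curves $\mathfrak{T}^k_t$ into three regions: near the $x$-axis (the plane of symmetry $x=0$ region is not an issue, the axis of rotation is the $x$-axis, so "near the axis" means small $r$), far from the axis, and an intermediate annulus.

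\textbf{Step 1: barriers for $t\le t_0$.} The choice of $t_0$ is tailored to two shrinking barriers. The enclosed sphere of radius $h$ centred at $(0,C)$ shrinks as $\sqrt{h^2-2nt}$, so by the avoidance principle for $t\le t_0<(1-\alpha^2)\tfrac{h^2}{2n}$ it still has radius at least $\alpha h>\alpha\tfrac{L}{2}$. The cylinder $\{|y|\le \ell\}$, i.e.\ $S^{n-1}_\ell\times\R$, lies outside $\mathcal{T}^k_0$ (after choosing the caps in Definition~\ref{def:slings} appropriately) and shrinks as $\sqrt{\ell^2-2(n-1)t}\ge \alpha\ell$ for $t\le t_0$. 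Hence $\mathcal{T}^k_t$ stays outside $\{r<\alpha\ell\}$ and encloses a sphere of radius $\alpha h$ for all $t\in[0,t_0]$, uniformly in $k$. This gives existence on $[0,t_0]$ once curvature bounds are in place (we argue a priori, or on $[0,\min(T_k,t_0))$ and conclude $T_k>t_0$). Lemma~\ref{lem:compact2} gives that for $t\ge t_1$ all $\mathfrak{T}^k_t$, $k$ large, lie in $[-L,L]\times[0,R]$ for a fixed $R$.

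\textbf{Step 2: the three regions.} Near $x=\pm$ the profile is a pair of vertical graphs $x=\pm v^k(r,t)$. The key point is to keep graphicality: I would use Sturm's theorem on $v^k_r$ (or on $v^k(\cdot,t)$ minus constants) to show the number of critical points of $v^k$ does not increase, so the top curve remains a vertical graph with one maximum over its $r$-range, and similarly the horizontal description near the two endpoints $r=\ell$ side and the far cap. On the portion $r\in[\alpha\ell/2,R]$ where the profile is a vertical graph over an $r$-interval of definite length, $x=v^k$ induces a solution of the full graph equation on an annulus in $\R^n$ bounded by $L$; applying Theorem~\ref{thm:evans_spruck} on balls of fixed radius (say $\min(\alpha\ell,h)/4$) centred in this annulus gives uniform gradient bounds, and then higher derivatives. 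Near the ends of the profile (close to $r=\ell$ inner turn and the outer cap) the curve is instead a horizontal graph $r=u^k(x,t)$ over $x\in[-L,L]$, with $u^k\ge\alpha\ell$, and there the uniform bound on $r$ away from the axis makes the horizontal equation \eqref{eq:horizontal_graph_equation} uniformly parabolic with bounded lower-order term $(n-1)/u$; a local gradient estimate, of Theorem~\ref{thm:horizontal_gradient} type using $u\ge\alpha\ell$ and $u\le R$, followed by \cite[Chapter III, Theorem 12.1]{LadyzhenskayaSolonnikovUraltseva1968} gives the higher bounds. The intermediate region is where the profile switches between vertical and horizontal graph descriptions; these overlap because the enclosed sphere of radius $\alpha h>\alpha L/2$ forces the top and bottom branches apart by a definite amount over $r$ near $C$, so each point lies in a region where one of the two descriptions has bounded slope.

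\textbf{Main obstacle.} The hardest step is ensuring the graphical decompositions persist with uniform overlap, since Theorem~\ref{thm:horizontal_gradient} does not apply globally (the sling is not a single horizontal graph). I would first try to apply Sturm-type intersection counts with vertical lines $r=$const and horizontal lines $x=$const to show the profile intersects each line $x=c$ at most twice and each line $r=c$ (for $r>\ell$) at most twice for $t\le t_0$, which yields the required graph structure; combined with the barrier confinement from Step~1 this gives the uniform covering, and then all derivative bounds on $A$ follow as in Proposition~\ref{prop:curvrain}.
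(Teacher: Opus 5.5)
Your overall architecture is the paper's. You use the same two barriers: the shrinking sphere, which keeps a disk of radius $\alpha h$ about $(0,C)$ enclosed by the profile, and the shrinking cylinder, which keeps $r\ge\alpha\ell$. You then use confinement from Lemma~\ref{lem:compact2}, Evans--Spruck on the part of $\mathcal{T}^k_t$ that is a graph over an annulus in $y$, an Altschuler--Angenent--Giga type gradient bound on the horizontal pieces using $u\ge\alpha\ell$, and parabolic regularity.

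The gap is in the step you yourself call the crux, preserving the graph structure. Both mechanisms you propose fail as stated.
\begin{itemize}
\item Sturm's theorem for $v^k_r$ controls the zeros of $v^k_r$, i.e.\ points where the profile is tangent to the $r$-direction. What destroys the vertical-graph description is the opposite degeneration $|v^k_r|\to\infty$: a new horizontal tangent, which is a new critical point of a \emph{horizontal} graph function. So ``the number of critical points of $v^k$ does not increase, so the top curve remains a vertical graph'' is a non sequitur.
\item Intersection counts with fixed lines $r=c$ are not monotone. The cylinder $\{|y|=c\}$ is not a solution, and $u-c$ satisfies $\partial_t(u-c)=\frac{(u-c)_{xx}}{1+u_x^2}-\frac{n-1}{u}$, which is inhomogeneous. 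For example, a flat local minimum of $u$ lying just above $c$ is pushed through the line, creating two new intersections.
\item Lines $x=c$ are fine, since hyperplanes are stationary. However, they only give that the profile is a union of two horizontal graphs, not that these graphs are unimodal.
\end{itemize}

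The paper closes this step by applying Sturm's theorem to the horizontal graph functions. These are $u^k_1$ (the outer arc, whose unique critical point is a maximum at $x=0$) and $u^k_2$ (the inner arc, whose unique critical point is a minimum at $x=0$). Here $p=(u^k_i)_x$ solves the linear equation $p_t=(\frac{p_x}{1+p^2})_x+\frac{n-1}{u^2}p$. Its zeroth-order coefficient is bounded because $u\ge\alpha\ell$, so no new critical points appear. The enclosed disk forces $u^k_1(0,t)>C+\alpha h$ and $u^k_2(0,t)<C-\alpha h$. Hence the profile has no horizontal tangent in $\{C-\alpha h<r<C+\alpha h\}$, and there it is a pair of vertical graphs over the whole interval, for all $k$ and all $t\in[0,t_0]$.

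If you prefer your intersection-count route, compare instead with the shrinking cylinders $r=\sqrt{c^2-2(n-1)t}$, which are solutions. Then $\mathfrak{T}^k_t$ meets $\{r=\rho\}$ at most as often as $\mathfrak{T}^k_0$ meets $\{r=\sqrt{\rho^2+2(n-1)t}\}$, i.e.\ at most twice.

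This also fixes your Evans--Spruck domain. It should not be $r\in[\alpha\ell/2,R]$: there the vertical graphs have infinite slope at the inner rim and at the cap, and they are not defined over full balls. The right domain is the fixed annulus $\{C-\alpha h<|y|<C+\alpha h\}$. The horizontal descriptions are then used on the overlapping regions $\{r>C+\tfrac{\alpha h}{2}\}$ and $\{r<C-\tfrac{\alpha h}{2}\}$.
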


\begin{proof}
    
    The choice $t_{0} < (1-\alpha ^2)\min\{\frac{h^{2}}{2n},\frac{\ell^{2}}{2(n-1)}\}$, together with the avoidance principle, ensures that for all $t \in [0,t_{0}]$, the interior sphere of $\mathcal{T}^{k}_{t}$ has radius greater than $\alpha{h}$ and the distance between $\mathcal{T}^{k}_{t}$ and the origin is greater than $\alpha{\ell}$ for all $k$.
    We partition each slingshot $\mathcal{T}_{t}^{k}$ into three regions: the outer region $\{|y| > C + \frac{\alpha h}{2}\}$, the intermediate region $\{C - \alpha h < |y| < C + \alpha h\}$, and the inner region $\{|y| < C - \frac{\alpha h}{2}\}$.

    Arguing as in Propositions \ref{prop:curvrain} and \ref{prop:curvufo}, an application of Sturm's theorem to the corresponding solutions to the horizontal graph equations, $u^{k}_{1}(x,t)$ and $u^{k}_{2}(x,t)$, which lie above and below the regions $\{|y| > C + \alpha h\}$ and $\{|y| < C - \alpha h\}$ respectively, implies that the two components of the profile curves $\mathfrak{T}_{t}^{k}$ remain vertical graphs within the region $\{C-\alpha h < r < C + \alpha h\}$. Therefore, by construction, $\mathcal{T}_{t}^{k} \cap \{C-\alpha h < r < C + \alpha h\}$ has two connected components, each of which can be expressed as the graph of a function of the form 
    \[
        w^{k}(z_{1},\cdots,z_{n},t) \doteqdot v^{k}(\sqrt{z_{1}^{2} + \cdots + z_{n}^{2}},t),
    \]
    where $v^{k}:[C-\alpha h,C+\alpha h] \times [0,t_{0}] \to \R_{+}$ is the corresponding solution to the vertical graph equation. The function $w^{k}$ is a solution to the full graph equation on $\{y \in \R^{n}: C-\alpha h<|y|<C+\alpha h\}$, and by construction satisfies $w^{k}(x,t) \leq L$. Therefore, the estimates of Evans-Spruck (Theorem \ref{thm:evans_spruck}) provide uniform bounds on all the higher order derivatives of $w^{k}$ which depend only on $h, l$ and $L$.

    In the region $\{|y| > C + \frac{\alpha h}{2}\}$, by Theorem \ref{lem:compact2}, there exists a constant $D > 0$ such that the horizontal graph functions $u^{k}_{1}(x,t)$ satisfy $\sup{u(x,t_{1})} \leq D$ for sufficiently large $k$. Hence, by Theorem \ref{thm:horizontal_gradient}, there exists a function $\sigma_{1}:\R_{+} \times \R_{+} \to \R_{+}$ depending only on $D$ for which 
    \[
        |(u_{1}^{k})_{x}(x,t)| \leq \sigma_{1}(u^{k}_{1}(x,t),t),
    \]
    for all sufficiently large $k$ and $t \in [t_{1},t_{0}]$. Therefore, we conclude that $|(u^{k}_{1})_{x}(x,t)| \leq \sigma\left(C+\frac{\alpha h}{2},t_{1}\right)$, throughout the region $\{r > C + \frac{\alpha h}{2}\}$ for all sufficiently large $k$ and $t \in [t_{1},t_{0}]$.

    In the region $\{|y|< C - \frac{\alpha h}{2}\}$, notice that our choice of $t_{0}$ implies that the horizontal graph functions $u^{k}_{2}(x,t)$ satisfy $\alpha\ell \leq u^{k}_{2}(x,t) \leq C - \frac{\alpha h}{2}$ for all $k$ and all $t \in [t_{1},t_{0}]$. Therefore, another application of Theorem \ref{thm:horizontal_gradient} implies the existence of a function $\sigma_{2}:\R_{+} \times \R_{+} \to \R_{+}$ depending only on $C$ and $h$ for which
    \[
        |(u^{k}_{2})_{x}(x,t)| \leq \sigma_{2}(u_{2}^{k}(x,t),t),
    \]
    for all $k$ and $t \in [t_{1},t_{0}]$. Therefore, we conclude that $|(u_{2}^{k})_{x}(x,t)| \leq \sigma_{2}(\alpha\ell,t_{1})$ throughout the region $\{r < C - \frac{\alpha h}{2}\}$ for all $k$ and $t \in [t_{1},t_{0}]$. 

    Precisely as in Propositions \ref{prop:curvrain} and \ref{prop:curvufo}, after an application of the regularity theory for parabolic PDE's, we obtain the result.
    \end{proof}

We are now ready to prove Theorems \ref{thm:main1} and \ref{thm:main2}.

\begin{proof}[Proofs of Theorems \ref{thm:main1} and \ref{thm:main2}]
    We present the proof for the raindrop. The UFO and the stretched torus are proven analogously. 
    
    Let $F^j:S^n\times[0,t_0]\to\R^{n+1}$ denote the embedding such that $F^j(S^n,t)=\mathcal{R}^j_{t}$ for any $t\in(0,t_0)$. 
    Because of the estimates in Propositions \ref{prop:curvrain} and \ref{prop:curvufo}, standard compactness arguments (see, e.g., \cite[§11.1.2]{AndrewsChowGuentherLangford2020}) can be applied to extract a convergent subsequence, yielding a smooth mean curvature flow
    \[
        F^{\infty}: S^{n} \times (0,t_{0}] \to \mathbb{R}^{n+1}\,,
    \]
    such that $F^{\infty}(\cdot,t):S^{n} \to \R^{n+1}$ is a hypersurface of revolution for each $t \in (0,t_{0}]$.
    See the proof of Theorem 1 in \cite{BourniReiris2024} for details on the application of the compactness argument in this setting.
    
    The lower bound on this maximal time $T$ is evident from comparison to the sphere $B_h(0)$ contained in $\mathcal{R}_0$. The upper bound on the time of existence is evident from comparing the evolution of the profile curve under the mean curvature flow to its evolution under the curve shortening flow, as in Lemma \ref{lem:compact1}.

    To prove the closeness statement to the initial condition, let $\varepsilon > 0$.  Let $j > 0$ be the smallest number for which $\mathfrak{R}^{k}_0\cap B_{j}(0)^{c}$ is contained in the strip $\{0 \leq r \leq \varepsilon\}$ for all $k > j$. By using balls of radius $\varepsilon > 0$ as barriers, we can ensure that for all $k > j$, $\emptyset \neq \mathfrak{R}^{k}_{t} \cap B_{j}(0)^{c} \subseteq \{0 \leq r \leq \epsilon\}$ for all $t \in [0,\frac{\varepsilon^{2}}{2n}]$. 
    A similar argument using balls of the same radius $\epsilon$ implies that for all $t \in [0,\tfrac{\epsilon^{2}}{2n}]$,
    \[
        \mathcal{R}^{k}_{t} \cap B_{j}(0) \subset B(\mathcal{R}, \varepsilon).
    \]
    Therefore, for all $k > j$ we have 
    \[
        \mathcal{R}^{k}_{t} \subset B(\mathcal{R},\varepsilon)
    \]
    for all $t \in [0,\frac{\epsilon^{2}}{2n}]$, as required.

\end{proof}

\subsection*{Final Remarks}
We conclude with a few remarks on the usefulness of the examples constructed in this paper and the methods used to construct them.
The solutions constructed here are model rotationally symmetric solutions. In the case of the raindrop and UFO solutions, their evolution is completely understood up to the extinction time. As a result, they have the potential to serve as barriers in a variety of geometric constructions. In fact, we have already demonstrated this idea within the present paper by using the one-dimensional model as a barrier in our constructions and the UFO solution in the construction of the torus.

The general techniques developed in Section 4 to obtain the uniform displacement and curvature estimates are, however, applicable in considerably greater generality. For example, it is conceivable to construct analogous compact solutions emerging from non-compact initial data by attaching raindrop-like fingers to a compact surface. One could then consider a sequence of mean curvature flows with multiple slings (as defined in Definition \ref{def:slings}). The methods developed here can be used to establish short-time existence and to show that, after a uniform time interval, the evolving surfaces are contained in a fixed compact region. To prove that the flow enters a compact region, the one-dimensional model is no longer sufficient. Instead, one can use the barrier constructed in \cite{SaezSchnurer2014}, as discussed in Remark \ref{rem:alt_barrier}. Once the spikes are shown to remain graphical, curvature estimates could be derived using the graphical estimates of Ecker-Huisken \cite{EckerHuisken1989} and Evans-Spruck \cite{EvansSpruck1992}.

\printbibliography

\end{document}